\newtheorem{lemma}{Lemma}
\newtheorem{theorem}{Theorem}
\newtheorem{definition}{Definition}
\newtheorem{assumption}{Assumption}
\newtheorem{remark}{Remark}
\DeclarePairedDelimiter{\norm}{\lVert}{\rVert}
\DeclareMathOperator*{\argmin}{arg\,min}
\begin{document}
	\title{Practical exponential stability of a robust data-driven nonlinear predictive	control scheme}
	\author{Mohammad Alsalti$^{\dag}$, Victor G. Lopez$^{\dag}$, Julian Berberich$^{\ddag}$, Frank Allgöwer$^{\ddag}$, and Matthias A. Müller$^{\dag}$ %
		\thanks{$^{\dagger}$Leibniz University Hannover, Institute of Automatic Control, 30167 Hannover, Germany. E-mail:\{\href{maitlo:alsalti@irt.uni-hannover.de}{alsalti},\href{maitlo:lopez@irt.uni-hannover.de}{lopez},\href{maitlo:mueller@irt.uni-hannover.de}{mueller}\}@irt.uni-hannover.de}%
		\thanks{$^{\ddagger}$University of Stuttgart, Institute for Systems Theory and Automatic Control, 70550 Stuttgart, Germany. E-mail: \{\href{maitlo:julian.berberich@ist.uni-stuttgart.de}{julian.berberich}, \href{maitlo:frank.allgower@ist.uni-stuttgart.de}{frank.allgower}\}@ist.uni-stuttgart.de}%
	}
	\maketitle
	\begin{abstract}
		We provide theoretical guarantees for recursive feasibility and practical exponential stability of the closed-loop system of a discrete-time multi-input multi-output full-state feedback linearizable nonlinear system when controlled by a robust data-driven nonlinear predictive control scheme. Two sources of uncertainties are considered: the mismatch due to the basis function approximation as well as output measurement noise. This technical report serves as a supplementary material to \cite{Alsalti2022b}.
	\end{abstract}
	\IEEEpeerreviewmaketitle
	\section{Introduction}\label{intro_sec}
In \cite{Alsalti2022a}, we considered a discrete-time multi-input multi-output (DT-MIMO) full-state feedback linearizable nonlinear systems and provided a data-based representation of their trajectories using a single, persistently exciting input-output trajectory and a set of basis functions that only depend on input and noisy output data. In general, the combination of basis functions serves as an approximation and not as an exact decomposition of the unknown nonlinearities. Furthermore, the collected output data is generally contaminated by noise. Therefore, we proposed in \cite[Section 3.2]{Alsalti2022b} a robust data-driven nonlinear predictive control scheme that stabilizes a fixed set-point of a DT-MIMO full-state feedback linearizable system despite unknown, but uniformly bounded, basis function approximation error and output noise. This supplementary report provides the theoretical analysis of recursive feasibility and practical exponential stability of this scheme. The proof is conceptually similar to the one presented in \cite{Berberich203} for DT-LTI systems. This is because feedback linearizable systems can be expressed as linear systems in transformed coordinates. However, additional challenges imposed by the nonlinear nature of the problem as well as having two sources of uncertainties require careful investigation which we provide in this report.
\section{Notation}
The set of integers in the interval $[a,b]$ is denoted by $\mathbb{Z}_{[a,b]}$. For a vector $\mu\in\mathbb{R}^n$ and a positive definite symmetric matrix $P=P^\top\succ0$, the $p-$norm is given by $\norm*{\mu}_p$ for $p=1,2,\infty$, whereas $\norm*{\mu}_P = \sqrt{\mu^\top P\mu}$. The minimum and maximum singular values of the matrix $P$ are denoted by $\sigma_{\textup{min}}(P),\sigma_{\textup{max}}(P)$, respectively. Similarly, $\lambda_{\textup{min}}(P),\lambda_{\textup{max}}(P)$ denote the minimum and maximum eigenvalues of that matrix. The induced norm of a matrix $P$ is denoted by $\norm*{P}_i$ for $i=1,\infty$. Wherever necessary, we use $\mathbf{0}$ to denote a vector or matrix of zeros of appropriate dimensions. For a sequence $\{\mathbf{z}_k\}_{k=0}^{N-1}$ with $\mathbf{z}_k\in\mathbb{R}^\eta$, each element is expressed as $\mathbf{z}_k=\begin{bmatrix}
	z_{1,k} & z_{2,k} & \dots & z_{\eta,k}
\end{bmatrix}^\top$. The stacked vector of that sequence is given by $\mathbf{z}=\begin{bmatrix}
	\mathbf{z}_0^\top & \dots & \mathbf{z}_{N-1}^\top
\end{bmatrix}^\top$, and a window of it by $\mathbf{z}_{[a,b]}=\begin{bmatrix}
	\mathbf{z}_a^\top & \dots & \mathbf{z}_b^\top
\end{bmatrix}^\top$. The Hankel matrix of depth $L$ of this sequence is given by
\begin{equation*}
	\begin{aligned}
		H_L(\mathbf{z})&=\begin{bmatrix}
			\mathbf{z}_0 & \mathbf{z}_1 & \dots & \mathbf{z}_{N-L}\\
			\mathbf{z}_1 & \mathbf{z}_2 & \dots & \mathbf{z}_{N-L+1}\\
			\vdots & \vdots & \ddots & \vdots\\
			\mathbf{z}_{L-1} & \mathbf{z}_L & \dots & \mathbf{z}_{N-1}
		\end{bmatrix}.
	\end{aligned}
\end{equation*}
Throughout the report, the notion of persistency of excitation (PE) is defined as follows.
\begin{definition}
	The sequence $\{\mathbf{z}_k\}_{k=0}^{N-1}$ is said to be persistently exciting of order $L$ if \textup{rank}$\left(H_L(\mathbf{z})\right)=\eta L$.
\end{definition}
\section{Preliminaries}
Consider a DT-MIMO nonlinear system of the form
\begin{equation}
	\begin{aligned}
		\mathbf{x}_{k+1} &= \boldsymbol{f}(\mathbf{x}_k,\mathbf{u}_k),\\
		\mathbf{y}_k &= \boldsymbol{h}(\mathbf{x}_k),
	\end{aligned}
	\label{NLsys}
\end{equation}
with $\mathbf{x}_k\in\mathbb{R}^n,\,\mathbf{u}_k,\mathbf{y}_k\in\mathbb{R}^m$, being the state, input and output vectors at time $k$ respectively, and  $\boldsymbol{f}:\mathbb{R}^n\times\mathbb{R}^m\to\mathbb{R}^n,\,\boldsymbol{h}:\mathbb{R}^n\to\mathbb{R}^m$, with $\boldsymbol{f}(\mathbf{0},\mathbf{0})=\mathbf{0},\,\boldsymbol{h}(\mathbf{0})=\mathbf{0}$, being analytic functions. As defined in \cite{Monaco87}, each output $y_i=h_i(\mathbf{x})$ of the nonlinear system \eqref{NLsys}, for $i\in\mathbb{Z}_{[1,m]}$, is said to have a (globally) well-defined relative degree $d_i$ if at least one of the $m$ inputs at time $k$ affects the $i-$th output at time $k+d_i$. In particular,
\begin{equation}
	\begin{aligned}
		y_{i,k+d_i} &= h_i(\boldsymbol{f}_O^{d_i-1}\left(\boldsymbol{f}(\mathbf{x}_k,\mathbf{u}_k)\right)).
		\label{output_k}
	\end{aligned}
\end{equation}
Under suitable assumptions (cf. \cite[Assumptions 1-3]{Alsalti2022a}), there exists a feedback linearizing control law $\mathbf{u}_k=\gamma(\mathbf{x}_k,\mathbf{v}_k)$ and an invertible coordinate transformation $\Xi_k=T(\mathbf{x}_k)$ such that the system \eqref{NLsys} is linear in transformed coordinates \cite{Monaco87}. Specifically \eqref{NLsys} is transformed into the form,
\begin{equation}
	\begin{aligned}
		\Xi_{k+1} &= \mathcal{A}\Xi_k + \mathcal{B}\mathbf{v}_k,\\
		\mathbf{y}_k &= \mathcal{C}\Xi_k,
	\end{aligned}\label{Lsys}
\end{equation}
where $\mathcal{A,B,C}$ are in the block-Brunovsky canonical form (which are a controllable and observable triplet) and
\begin{equation}
	\Xi_k = \begin{bmatrix}y_{1,[k,k+d_1-1]}^\top & \dots & &y_{m,[k,k+d_m-1]}^\top\end{bmatrix}^\top,\label{definition_of_plain_Xi}
\end{equation}
with $d_i$ denoting the relative degree of the $i-$th output for $i\in\mathbb{Z}_{[1,m]}$ (see \cite{Monaco87,AlsaltiBerLopAll2021}). The block-Brunovsky form is defined as
\begin{equation*}
	\begin{matrix}
		\mathcal{A}\coloneqq\begin{bmatrix}
			A_1 & \dots & 0\\
			\vdots & \ddots & \vdots\\
			0 & \dots & A_m
		\end{bmatrix},
		&\mathcal{B}\coloneqq \begin{bmatrix}
			B_1& \dots & 0\\
			\vdots & \ddots & \vdots\\
			0 & \dots &B_m
		\end{bmatrix},
		&\mathcal{C}\coloneqq\begin{bmatrix}
			C_1& \dots & 0\\
			\vdots & \ddots & \vdots\\
			0 & \dots &C_m
		\end{bmatrix},
	\end{matrix}
\end{equation*}
with each $A_i,B_i,C_i$ for $i\in\mathbb{Z}_{[1,m]}$ defined as
\begin{equation*}
	\begin{matrix}
		A_i \coloneqq \begin{bmatrix}
			0&1&\dots&0 \\ \vdots&\ddots&\ddots&\vdots \\ \vdots& &\ddots&1 \\ 0&\dots&\dots&0
		\end{bmatrix}, &
		B_i \coloneqq \begin{bmatrix}
			0\\ \vdots\\ 0\\ 1
		\end{bmatrix}, &
		\,C_i \coloneqq \begin{bmatrix}
			1 & 0 & \dots & 0
		\end{bmatrix}.
	\end{matrix}
\end{equation*}
In \cite{Monaco87}, it was shown that the synthetic input $\mathbf{v}_k$ is given by the iterated composition of the analytic functions $\boldsymbol{f}(\cdot,\cdot),\,\boldsymbol{h}(\cdot)$ (and hence locally Lipschitz continuous) of the form
\begin{equation}
	\mathbf{v}_k = \begin{bmatrix}
		v_{1,k} \\ \vdots \\ v_{m,k}
	\end{bmatrix} = \begin{bmatrix}
		{h}_1\left(\boldsymbol{f}_O^{d_1-1}\left(\boldsymbol{f}(\mathbf{x}_k,\mathbf{u}_k)\right)\right)\\
		\vdots\\
		{h}_m\left(\boldsymbol{f}_O^{d_m-1}\left(\boldsymbol{f}(\mathbf{x}_k,\mathbf{u}_k)\right)\right)
	\end{bmatrix}\label{definition_of_v_and_Phi}.
\end{equation}
Since $\boldsymbol{f}(\cdot,\cdot),\,\boldsymbol{h}(\cdot)$ are unknown then $\mathbf{v}_k$ is also unknown. We parameterize this unknown nonlinear function by $\tilde{\Phi}(\mathbf{u}_k,\mathbf{x}_k):\mathbb{R}^m\times\mathbb{R}^n\to\mathbb{R}^m$. Notice that since $\mathbf{x}_k = T^{-1}(\Xi_k)$, one can write\footnote{Notice that $\Phi$ is the iterated composition of continuously differentiable functions $\boldsymbol{f},\boldsymbol{h}$ and $T^{-1}$ (which is continuously differentiable by \eqref{output_k} and \eqref{definition_of_plain_Xi}). Hence, $\Phi$ is locally Lipschitz continuous.} $\Phi(\mathbf{u}_k,\Xi_k)\coloneqq\tilde{\Phi}(\mathbf{u},T^{-1}(\Xi))$ and use a set of basis functions to approximate it. In particular,
\begin{equation}
	\mathbf{v}_k \eqqcolon \Phi(\mathbf{u}_k,\Xi_k) = \begin{bmatrix}
		\phi_1(\mathbf{u}_k,\mathbf{x}_k)\\
		\vdots\\
		\phi_m(\mathbf{u}_k,\mathbf{x}_k)
	\end{bmatrix} = \overbrace{\begin{bmatrix}
			\rule[.5ex]{1.5em}{0.4pt} \,g_1^\top\, \rule[.5ex]{1.5em}{0.4pt}\\
			\vdots\\
			\rule[.5ex]{1.5em}{0.4pt} \,g_m^\top\, \rule[.5ex]{1.5em}{0.4pt}\\
	\end{bmatrix}}^{\mathcal{G}}\Psi(\mathbf{u}_k,\Xi_k) + \overbrace{\begin{bmatrix}
			\varepsilon_1(\mathbf{u}_k,\Xi_k)\\ \vdots\\ \varepsilon_m(\mathbf{u}_k,\Xi_k)
	\end{bmatrix}}^{\scalebox{1.5}{$\epsilon$}(\mathbf{u}_k,\Xi_k)},
	\label{basis}
\end{equation}
where $\Psi(\mathbf{u}_k,\Xi_k)$ is the stacked vector of $r\in\mathbb{N}$ locally Lipschitz continuous (in $\Xi$) and linearly independent basis functions, $\scalebox{1.5}{$\epsilon$}(\mathbf{u}_k,\Xi_k)$ is the stacked vector of basis functions approximation errors and $\mathcal{G}$ is the matrix of \textit{unknown}\footnote{Notice that we do not evaluate the minimization problem in \eqref{G_def} throughout the report.} coefficients which can be defined in the following manner
\begin{equation}
	\mathcal{G}\coloneqq\argmin\limits_{{G}} \left< \Phi \hspace{-0.5mm}-\hspace{-0.5mm} {G}\Psi , \Phi \hspace{-0.5mm}-\hspace{-0.5mm} {G}\Psi \right>,
	\label{G_def}
\end{equation}
with $\left< \rho_1, \rho_2 \right> = \int_\Omega \rho_1(s_1,s_2)\rho_2(s_1,s_2) ds_1ds_2$ for some $\Omega\subseteq \mathbb{R}^m\times\mathbb{R}^n$.
Similar to \cite{Alsalti2022b}, the matrix $\mathcal{G}$ is assumed to have full row rank and, hence, a right inverse $\mathcal{G}^\dagger = \mathcal{G}^\top(\mathcal{GG}^\top)^{-1}$ exists.
In practice, the collected output measurements are noisy. In what follows, we consider random but uniformly bounded additive output measurement noise. This is denoted by $\tilde{\mathbf{y}}_k = \mathbf{y}_k + \mathbf{w}_k$, where $\norm*{\mathbf{w}_k}_\infty\leq w^*$ for all $k\geq0$. The corresponding state vector (see \eqref{definition_of_plain_Xi}) constructed from noisy data is denoted $\tilde{\Xi}_k = \Xi_k + \omega_k$ where $\omega_k=\begin{bmatrix}
	w_{1,[k,k+d_1-1]}^\top&\cdots&w_{m,[k,k+d_m-1]}^\top
\end{bmatrix}^\top$. Using the noisy data to approximate the unknown nonlinear function in \eqref{basis} results in an additional error term. In particular,
\begin{equation}
	\Phi(\mathbf{u}_k,\Xi_k) = \mathcal{G}\Psi(\mathbf{u}_k,\tilde{\Xi}_k) + \scalebox{1.5}{$\epsilon$}(\mathbf{u}_k,\tilde{\Xi}_k) + \delta(\omega_k),
	\label{noisy_basis}
\end{equation}
where $\delta(\omega_k)\coloneqq \mathcal{G}\Psi(\mathbf{u}_k,{\Xi}_k) + \scalebox{1.5}{$\epsilon$}(\mathbf{u}_k,{\Xi}_k) - \mathcal{G}\Psi(\mathbf{u}_k,\tilde{\Xi}_k) - \scalebox{1.5}{$\epsilon$}(\mathbf{u}_k,\tilde{\Xi}_k)$. Substituting \eqref{noisy_basis} back in \eqref{Lsys} results in
\begin{align}
	\Xi_{k+1} &= \mathcal{A}\Xi_k + \mathcal{B}\mathcal{G}(\hat{\Psi}_k(\mathbf{u},{\tilde{\Xi}})+\hat{E}_k(\mathbf{u},{\tilde{\Xi}}) + \hat{D}_k(\omega)),\notag\\
	{\mathbf{\tilde{y}}_k }&{= \mathcal{C}\Xi_k + \mathbf{w}_k},	\label{Lsys3}
\end{align}
where
\begin{equation}
	\begin{matrix}
		\hat{\Psi}_k(\mathbf{u},{\tilde{\Xi}}) \coloneqq \Psi(\mathbf{u}_k,{\tilde{\Xi}_k}), \quad
		\hat{\varepsilon}_{i,k}(\mathbf{u},{\tilde{\Xi}}) \coloneqq 	\varepsilon_i(\mathbf{u}_k,{\tilde{\Xi}_k}),\\
		\hat{E}_k(\mathbf{u},{\tilde{\Xi}}) \coloneqq E(\mathbf{u}_k,{\tilde{\Xi}_k})= \mathcal{G}^\dagger\scalebox{1.5}{$\epsilon$}(\mathbf{u}_k,{\tilde{\Xi}_k}),\\
		\hat{\delta}_k(\omega) \coloneqq \delta(\omega_k), \quad \hat{D}_k(\omega) \coloneqq D(\omega_k)= \mathcal{G}^\dagger\delta(\omega_k).
	\end{matrix}\label{imp_defs}
\end{equation}
Notice that \eqref{Lsys3} is an equivalent controllable linear system \cite[Lemma 1]{Alsalti2022a} to \eqref{NLsys} in transformed coordinates. Further, by the block-Brunovsky structure of the system \eqref{Lsys}, the following important property holds
\begin{subequations}
	\begin{align}
		y_{i,k+d_i} \stackrel{\eqref{Lsys}}{=} v_{i,k} &\stackrel{\eqref{basis}}{=} \phi_i(\mathbf{u}_k,\Xi_k)= g_i^\top(\hat{\Psi}_k(\mathbf{u},{{\Xi}})+\hat{E}_k(\mathbf{u},{{\Xi}}))\label{y_for_proofsA}\\
		&\stackrel{\eqref{noisy_basis}}{=}g_i^\top(\hat{\Psi}_k(\mathbf{u},{\tilde{\Xi}})+\hat{E}_k(\mathbf{u},{\tilde{\Xi}}) + \hat{D}_k(\omega)).\label{y_for_proofsB}
	\end{align}\label{y_for_proofs}
\end{subequations}
\indent Similar to \cite[Assumption 4]{Alsalti2022b}, we assume that the previously collected trajectory evolve in a compact subset of the input-state space $\Omega\subseteq\mathbb{R}^m\times\mathbb{R}^n$. This, along with local Lipschitz continuity properties of the function $\Phi(\mathbf{u}_k,\Xi_k)$ (see \eqref{definition_of_v_and_Phi} and the discussion above it), and the continuity of chosen set of basis functions, guarantee a uniform upper bound on the approximation error $\scalebox{1.5}{$\epsilon$}(\mathbf{u}_k,\Xi_k)$ for all $(\mathbf{u}_k,\Xi_k)\in\Omega$. This is summarized in the following assumption, where an explicit value of the uniform bound is assumed to be given.
\begin{assumption}\label{bounded_err_assmp}
	The error in the basis function approximation $\hat{\scalebox{1.5}{$\epsilon$}}_k(\mathbf{u},\Xi)$ is uniformly upper bounded by $\varepsilon^*>0$, i.e., $\norm*{\hat{\scalebox{1.5}{$\epsilon$}}_k(\mathbf{u},\Xi)}_\infty\leq\varepsilon^*$, for all $(\mathbf{u},\Xi)\in\Omega\subseteq\mathbb{R}^m\times\mathbb{R}^n$, where $\Omega$ is a compact subset of the input-state space.
\end{assumption}
\begin{remark}\label{remark_on_E_bnd} Since $\mathcal{G}$ has full row rank, Assumption \ref{bounded_err_assmp} implies that
	\begin{equation}
			\norm*{\hat{E}_k(\mathbf{u},\Xi)}_\infty \leq \norm*{\mathcal{G}^\dagger}_\infty\norm*{\hat{\scalebox{1.5}{$\epsilon$}}_k(\mathbf{u},\Xi)}_\infty\leq \norm*{\mathcal{G}^\dagger}_\infty\varepsilon^*.\label{tilde_E_bdd_asmp}
	\end{equation}
\end{remark}
\begin{remark}
	Note that $\delta(\mathbf{0})=\mathbf{0}$ and, by local Lipschitz continuity $\Phi$ and $\Psi$ as well as boundedness of $\mathbf{w}_k$ there exists a $K_w>0$ such that $\norm*{\delta(\omega_k)}_\infty\leq K_w w^*$ for all $k\geq0$.
\end{remark}
\begin{remark}
	The Lipschitz continuity constant of the basis functions $\Psi(\mathbf{u}_k,\Xi_k)$ in $\Xi$ on the compact set $\Omega$ is denoted by $K_\Psi$.
\end{remark}
\begin{remark}
	In the main result of this technical report (Theorem \ref{main_result_of_report}), it will be shown that the closed-loop trajectories of the system under application of the robust data-driven predictive controller also evolve in a compact subset of the input-state space. In particular, the set of initial states for which the predictive control scheme is initially feasible serves as the guaranteed region of attraction of the closed-loop system and, since it is ($d_{\textup{max}}$-step) invariant as shown in Theorem \ref{main_result_of_report}, the closed-loop trajectories evolve in a compact subset of the input-state space.
\end{remark}
For the nominal case when the basis function approximation error is zero and the data is noiseless, i.e., $\hat{\scalebox{1.5}{$\epsilon$}}_k(\mathbf{u},\Xi)=\hat{\delta}_k(\omega)\equiv0$, then \cite[Theorem 2]{Alsalti2022a} sates that if $\{\hat{\Psi}_k(\mathbf{u},\Xi)\}_{k=0}^{N-1}$ is persistently exciting of order $L+n$, then any sequence $\left\lbrace \mathbf{\bar{u}}_k\right\rbrace_{k=0}^{L-1},\,\{{\bar{y}}_{i,k}\}_{k=0}^{L+d_i-1}$,\, is an input-output trajectory of \eqref{NLsys} if and only if there exists an $\alpha\in\mathbb{R}^{N-L+1}$ such that the following holds
\begin{equation}
	\begin{bmatrix}
		H_L(\hat{\Psi}(\mathbf{u},\Xi)) \\ H_{L+1}(\Xi)
	\end{bmatrix}\alpha = \begin{bmatrix}
		\hat{\Psi}(\mathbf{\bar{u}},\bar{\Xi}) \\ \bar{\Xi}
	\end{bmatrix}\label{DB_rep},
\end{equation}
where $\bar{\Xi}_k = \begin{bmatrix}\bar{y}_{1,[k,k+d_1-1]}^\top & \cdots & \bar{y}_{m,[k,k+d_m-1]}^\top\end{bmatrix}^\top$. In the next section, we use \eqref{DB_rep} as a prediction model in the robust data-driven nonlinear predictive control scheme and later prove that it results in recursive feasibility and practical exponential stability of the closed-loop system. This is done using only input and noisy output data of the nonlinear system \eqref{NLsys} and assuming that the basis functions approximation error in \eqref{basis} is uniformly upper bounded as in Assumption \ref{bounded_err_assmp}.

	\section{Robust nonlinear predictive control scheme}\label{rob_scheme_sec}
Using the data-based representation \eqref{DB_rep} of all the trajectories of the nonlinear system \eqref{NLsys}, we presented in \cite[Theorem 5]{Alsalti2022b} a data-driven $d_{\textup{max}}-$step robust nonlinear predictive control scheme, where $d_{\textup{max}}$ is the maximum relative degree of all outputs. This scheme is inherently an output feedback nonlinear predictive controller and only uses previously collected input and noisy output measurements of \eqref{NLsys} which are denoted $\{\mathbf{u}_k^{\textup{d}}\}_{k=0}^{N-1},\,\{\tilde{y}_{i,k}^{\textup{d}}\}_{k=0}^{N+d_i-1}$ (and the corresponding state sequence $\{\tilde{\Xi}_k^{\textup{d}}\}_{k=0}^{N-1}$, see \eqref{definition_of_plain_Xi}). The data are collected such that $\{\hat{\Psi}_k(\mathbf{u}^{\textup{d}},\tilde{\Xi}^{\textup{d}})\}_{k=0}^{N-1}$ is persistently exciting of order $L+d_{\textup{max}}+n$ which, for sufficiently small $\varepsilon^*$ and $w^*$, implies persistency of excitation of $\{\hat{\Psi}_k(\mathbf{u}^{\textup{d}},\tilde{\Xi}^{\textup{d}}) + \hat{E}_k(\mathbf{u}^{\textup{d}},\tilde{\Xi}^{\textup{d}}) + \hat{D}_k(\omega^{\textup{d}})\}_{k=0}^{N-1}$ of the same order. This is the case when, e.g., $\norm*{H_{L+d_{\textup{max}}}(\hat{E}(\mathbf{u}^{\textup{d}},\tilde{\Xi}^{\textup{d}})+\hat{D}(\omega^{\textup{d}}))}_2<\sigma_{\textup{min}}(H_{L+d_{\textup{max}}}(\hat{\Psi}(\mathbf{u}^{\textup{d}},\tilde{\Xi}^{\textup{d}})))$. This is summarized in the following assumption.
\begin{assumption}\label{PE_asmp}
	The input-output data $(\mathbf{u}^{\textup{d}},\mathbf{\tilde{y}}^{d})$ are collected such that $\{\hat{\Psi}_k(\mathbf{u}^{\textup{d}},\tilde{\Xi}^{\textup{d}})\}_{k=0}^{N-1}$ is persistently exciting of order $L+d_{\textup{max}}+n$.
\end{assumption}
In Assumption \ref{PE_asmp}, $L$ is the prediction horizon of the proposed predictive control scheme. We require that the prediction horizon satisfies the following assumption.
\begin{assumption}\label{horizon_rob_asmp}
	The prediction horizon satisfies $L\geq d_{\textup{max}}$.
\end{assumption}
To implement the data-driven predictive control scheme, the minimization problem \eqref{DDNMPC2} below is solved at time $t$. Then, once a solution is obtained, we apply the first $d_{\textup{max}}-$instances of the optimal predicted input to the system, i.e., $\bar{\mathbf{u}}^*_{[0,d_{\textup{max}}-1]}(t)$. Finally, the prediction horizon is shifted $d_{\textup{max}}$ steps to the future and the process is repeated again at time $t+d_{\textup{max}}$. The optimal control problem at time $t$ takes the form
\begin{subequations}
	\begin{align}
		J^*_t&=\min\limits_{\tiny\begin{matrix}\mathbf{\bar{u}}(t),\bar{y}_i(t)\\\alpha(t),\sigma(t)\end{matrix}}\sum\limits_{k=0}^{L-1}\ell(\mathbf{\bar{u}}_k(t),\mathbf{\bar{y}}_k(t))+\lambda_\alpha\max\{\varepsilon^*,w^*\}\norm*{\alpha(t)}_2^2+\lambda_\sigma\norm*{\sigma(t)}_2^2\label{opt_val_fcn_def}\\[2ex]
		\text{s.t.}&\begin{bmatrix}\hat{\Psi}(\mathbf{\bar{u}}(t),\bar{\Xi}(t)) \\ \bar{\Xi}(t)\end{bmatrix}+\sigma(t) =
		\begin{bmatrix} H_{L+d_{\textup{max}}}(\hat{\Psi}(\mathbf{u}^{\textup{d}},{\tilde{\Xi}^{\textup{d}}}))  \\ H_{L+d_{\textup{max}}+1}({\tilde{\Xi}^{\textup{d}}})\end{bmatrix} \alpha(t),\label{pc2_willems}\\
		&\begin{bmatrix}
			\mathbf{\bar{u}}_{[-d_{\textup{max}},-1]}(t)\\
			\mathbf{\bar{y}}_{[-d_{\textup{max}},-1]}(t)
		\end{bmatrix} = \begin{bmatrix}
			\mathbf{{u}}_{[t-d_{\textup{max}},t-1]}\\
			{\mathbf{\tilde{y}}_{[t-d_{\textup{max}},t-1]}}
		\end{bmatrix}\label{pc2_ini},\\
		&\bar{y}_{i,[L,L+d_i-1]}(t) = \mathbf{0},\quad \forall i\in\mathbb{Z}_{[1,m]},\label{pc2_term}\\
		&\bar{\Xi}_k(t) = \begin{bmatrix}\bar{y}_{1,[k,k+d_1-1]}^\top(t) & \dots & &\bar{y}_{m,[k,k+d_m-1]}^\top(t)\end{bmatrix}^\top,\\
		&\scalebox{1}{$\norm*{\sigma_k(t)}_\infty\leq K_{\Psi}w^* + (\varepsilon^* + K_w w^*)\norm*{\mathcal{G}^\dagger}_\infty(1+\norm*{\alpha(t)}_1),\label{slack_const}$}\\
		&\mathbf{\bar{u}}_k(t)\in\mathcal{U},\quad\forall k\in\mathbb{Z}_{[-d_{\textup{max}},L-1]}.\label{rob_input_const_set}
	\end{align}%
	\label{DDNMPC2}%
\end{subequations}
The notation used in \eqref{DDNMPC2} is explained as follows (and summarized in Table \ref{notation_table}). The sequences $\{\mathbf{u}_k^{\textup{d}}\}_{k=0}^{N-1},\{y_{i,k}^{\textup{d}}\}_{k=0}^{N+d_i-1}$, for $i\in\mathbb{Z}_{[1,m]}$, and $\{\Xi_k^{\textup{d}}\}_{k=0}^{N}$ denote previously collected (noideless) input, output and state data, respectively. The sequences $\{\mathbf{\bar{u}}_k(t)\}_{k=-d_{\textup{max}}}^{L-1},\{\bar{\Xi}_k(t)\}_{k=-d_{\textup{max}}}^{L}$ and $\{{\bar{y}}_{i,k}(t)\}_{k=-d_{\textup{max}}}^{L+d_i-1}$ constitute the predicted input, state and output trajectories, predicted at time $t$, whereas $\mathbf{u}_t,\Xi_t,y_{i,t}$ denote the current (noiseless) input, state, and outputs of the system at time $t$, respectively. The optimal predicted input, output and state sequences (i.e., solutions of \eqref{DDNMPC2} at time $t$) are denoted by $\{\mathbf{\bar{u}}_k^*(t)\}_{k=-d_{\textup{max}}}^{L-1},\{\bar{\Xi}_k^*(t)\}_{k=-d_{\textup{max}}}^{L}$ and $\{{\bar{y}}_{i,k}^*(t)\}_{k=-d_{\textup{max}}}^{L+d_i-1}$. We denote the open-loop cost and the optimal open-loop cost of \eqref{DDNMPC2} by $J_t$ and $J^*_t$, respectively. In \eqref{opt_val_fcn_def}, we will consider quadratic stage cost functions that penalize the difference of the predicted input and output trajectories from a desired equilibrium that is known a priori. In particular, we consider
\begin{equation}
	\ell(\mathbf{\bar{u}}_k(t),\mathbf{\bar{y}}_k(t)) = \norm*{\mathbf{\bar{u}}_k - \mathbf{u}^s}_R^2 + \norm*{\mathbf{\bar{y}}_k - \mathbf{y}^s}_Q^2,\label{stage_cost_fcn}
\end{equation}
where $Q = Q^\top \succ0$ and $R=R^\top \succ 0$ and $\mathbf{u}^s,\,\mathbf{y}^s$ are the input and output equilibrium points that correspond to an equilibrium state $\mathbf{x}^s$ for the system in \eqref{NLsys}. In the following theoretical analysis, we consider deviation from the origin, i.e., $\mathbf{u}^s=\mathbf{y}^s=\mathbf{0}$. The results similarly apply to nonzero fixed equilibrium points subject to changes in the structures of some constants of the proofs of Theorems \ref{recrusive_feasibility_theorem} and \ref{main_result_of_report}.
\begin{remark}
	Note that $\Xi^s=T(\mathbf{x}^s)$ is an equilibrium point of system \eqref{Lsys3}. Furthermore, $\Xi^s$ is an equilibrium point of \eqref{Lsys3} if and only if $y_{i,k}=y_i^s$ for $d_i$ consecutive instances, for all $i\in\mathbb{Z}_{[1,m]}$, with $y_i^s$ being the corresponding element of $\mathbf{y}^s$.
\end{remark}
The constraint \eqref{pc2_willems} uses a modified version of \eqref{DB_rep} to generate the predicted inputs and outputs $\mathbf{\bar{u}}(t),\bar{y}_i(t)$. Notice that, due to the basis function approximation and noisy output measurements, \eqref{DB_rep} does not hold anymore. Therefore, we introduce a slack variable $\sigma(t) = [\sigma_{\Psi}^\top(t)\quad \sigma_{\Xi}^\top(t)]^\top$ and penalize its norm in the cost function (cf. \cite{Coulson20, Berberich203}). Additionally, we require the slack variable to be bounded as in \eqref{slack_const}. Furthermore, to mitigate the effect of uncertainties on the accuracy of the predictions, we penalize $\norm*{\alpha(t)}_2$ in the cost function as well. This is because larger values of $\norm*{\alpha(t)}_2$ amplify the effect of the basis function approximation and noise in the data and, hence, smaller norms are preferred. In \eqref{pc2_ini}, the state at time $t-d_{\textup{max}}$ is fixed by the past $d_{\textup{max}}-$instances of the output (see \eqref{definition_of_plain_Xi}), where $d_{\textup{max}}$ is the maximum relative degree of all outputs as well as the system's controllability index. In contrast, the past $d_{\textup{max}}-$instances of the input are used to \emph{implicitly} fix the state at time $t$, through their effect on the outputs $y_{i,[t,t+d_{\textup{max}}-1]}$ (see \eqref{y_for_proofs}). To show stability, we enforce terminal equality constraints \eqref{pc2_term} that forces the predicted state to be zero at the end of the prediction horizon.\par
The system's inputs and outputs are subject to pointwise-in-time constraints, i.e., $\mathbf{{u}}_t\in\mathcal{U}\subseteq\mathbb{R}^m$, $\mathbf{y}_t\in\mathcal{Y}\subseteq\mathbb{R}^m$, and that $(\mathbf{u}^s,\mathbf{y}^s)\in\text{int}(\mathcal{U}\times\mathcal{Y})$. In this work, we assume that $\mathcal{U}$ is compact and we do not consider output constraints i.e., $\mathcal{Y}=\mathbb{R}^m$, however, similar arguments can be made as in \cite{Berberich205} to ensure output constraint satisfaction.\par
The goal of this report is to show that the control scheme \eqref{DDNMPC2} is recursively feasible and results in practical exponential stability of the closed-loop system. Similar to \cite{Berberich203}, the proof is divided into four parts. First, in the remainder of this section, we provide in Lemma \ref{lemma_output_difference} a bound on the difference between $|\breve{y}_{i,t+k} - \bar{y}_{i,k}^*(t)|$ similar to those derived in \cite[Theorems 3 and 4]{Alsalti2022a}, where $\breve{y}_{i,t+k}$ for $k\in\mathbb{Z}_{[0,L+d_i-1]}$ is the output which would have been obtained by simulating the optimal predicted input $\bar{\mathbf{u}}^*(t)$. Second, in Section \ref{lyap_sec} we propose a Lyapunov function candidate and show that it is locally lower and upper bounded. Third, in Section \ref{rec_feas_sec} we show recursive feasibility of the predictive control scheme, and finally, in Section \ref{prac_stb_sec} we show practical stability and exponential convergence of the Lyapunov function to a region whose size depends on the basis function approximation error bound $\varepsilon^*$ and the noise bound $w^*$.\par
\begin{table}[t]
	\caption{Summary of notation used throughout the report}
	\label{notation_table}
	\begin{center}\begin{tabular}{ |p{2cm}|p{10cm}|  }
			\hline
			\textbf{Expression} &  \textbf{Meaning}\\
			\hline
			$\mathbf{u}^s$ & User-specified input equilibrium.\\
			$\mathbf{y}^s$ & User-specified output equilibrium.\\
			$\Xi^s$ & User-specified state equilibrium (see \eqref{definition_of_plain_Xi}).\\
			$\mathbf{u}^{\textup{d}}$ & Previously collected input data.\\
			$\mathbf{y}^{\textup{d}},\,\mathbf{\tilde{y}}^{\textup{d}}$ & Previously collected noiseless and noisy output data, respectively.\\
			$\Xi^{\textup{d}},\,\tilde{\Xi}^{\textup{d}}$ & Previously collected noiseless and noisy state data, respectively (see \eqref{definition_of_plain_Xi}).\\
			$\mathbf{u}_t$ & Online (closed-loop) input data at time $t$.\\
			$\mathbf{y}_t$ & Online (closed-loop) output data at time $t$.\\
			$\bar{\mathbf{u}}_k(t)$ & The $k$-th instant of the predicted input sequence, predicted at time $t$.\\
			$\bar{\mathbf{y}}_k(t)$ & The $k$-th instant of the predicted output sequence, predicted at time $t$.\\
			$\bar{\Xi}_k(t)$ & The $k-$th instant of the predicted state sequence, predicted at time $t$ (see \eqref{definition_of_plain_Xi}).\\
			$\breve{y}_{i,t+k}$ & The $k$-th instant of the $i-$th output obtained when applying the optimal predicted input sequence predicted at time $t$, i.e., $\bar{\mathbf{u}}^*(t)$ to the system (see \eqref{y_for_proofs}).\\
			$\breve{\Xi}_{t+k}$ & The $k-$th instant of the state obtained when applying the optimal predicted input $\mathbf{\bar{u}}^*(t)$ to the system at time $t$ (see \eqref{definition_of_plain_Xi}).\\
			$L$ & Prediction horizon.\\
			$\mathcal{U}$ & Input constraint set.\\
			$J^*_t$ & Optimal open-loop cost at time $t$.\\
			$J_t$ & Open-loop cost at time $t$.\\
			$\ell(\bar{\mathbf{u}}_k(t),\bar{\mathbf{y}}_k(t))$ & Stage cost function at time $t$.\\
			\hline
	\end{tabular}\end{center}
\end{table}
\begin{lemma}\label{lemma_output_difference}
Let $\bar{\mathbf{u}}^*_{[0,L-1]}(t),\bar{y}_{i,[0,L+d_i-1]}^*(t),\alpha^*(t),\sigma^*(t)$ be solutions of \eqref{DDNMPC2} at time $t$, and let $\breve{\Xi}_{[t,t+L]}$ and $\breve{y}_{i,[t,t+L+d_i-1]}$ be the state and output of system \eqref{Lsys3} resulting from applying $\bar{\mathbf{u}}^*_{[0,L-1]}(t)$ to \eqref{Lsys3} at time $t$. Then, for all $k\in\mathbb{Z}_{[0,L+d_i-1]}$,
\begin{align}
	\left|\breve{y}_{i,t+k} - \bar{y}_{i,k}^*(t)\right|&\leq\mathcal{P}^{k + d_{\textup{max}} - d_i}(K_{\Xi})\Big(\varepsilon^*(1+\norm*{\alpha^*(t)}_1) + (1 + K_w)w^*\norm*{\alpha^*(t)}_1 + (1+\norm*{\mathcal{G}}_\infty)\norm*{\sigma^*(t)}_\infty\Big)
	\label{output_difference}
\end{align}
where $K_\Xi$ is the Lipschitz continuity constant of $\Phi(\cdot,\cdot)$ w.r.t. $\Xi$, $K_w>0$ and $\mathcal{P}^k(K_\Xi)$ is a polynomial of the form $\mathcal{P}^k(K_\Xi) = (K_\Xi)^k + (K_\Xi)^{k-1} + \dots + K_\Xi +1$.
\end{lemma}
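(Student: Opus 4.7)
My plan is to establish the bound by induction on $k$. The key tool is the equality constraint \eqref{pc2_willems}, which lets me write $\bar{\Xi}^*_k(t) + \sigma^*_{\Xi,k}(t) = H_{L+d_{\textup{max}}+1}(\tilde{\Xi}^{\textup{d}})_{[k]}\alpha^*(t)$ and $\hat{\Psi}_k(\bar{\mathbf{u}}^*,\bar{\Xi}^*) + \sigma^*_{\Psi,k}(t) = H_{L+d_{\textup{max}}}(\hat{\Psi}(\mathbf{u}^{\textup{d}},\tilde{\Xi}^{\textup{d}}))_{[k]}\alpha^*(t)$, where $[k]$ denotes the $k$-th block row. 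Applying \eqref{y_for_proofsB} entry-wise to the noisy output data inside $\tilde{\Xi}^{\textup{d}}$ and using $\tilde{y}^{\textup{d}} = y^{\textup{d}} + w^{\textup{d}}$, I can rewrite $\bar{y}^*_{i,k}(t)$ (the last entry of the $i$-th block of $\bar{\Xi}^*_{k-d_i+1}(t)$) as $g_i^\top$ times the appropriate Hankel rows of $\hat{\Psi}(\mathbf{u}^{\textup{d}},\tilde{\Xi}^{\textup{d}}) + \hat{E}(\mathbf{u}^{\textup{d}},\tilde{\Xi}^{\textup{d}}) + \hat{D}(\omega^{\textup{d}})$ multiplied by $\alpha^*(t)$, plus a Hankel-noise contribution in $w^{\textup{d}}$ and a slack term. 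Crucially, the identity $\mathcal{G}\mathcal{G}^\dagger = I$ collapses $g_i^\top \hat{E}$ to $\varepsilon_i$ and $g_i^\top \hat{D}$ to $\delta_i$ (see \eqref{imp_defs}), which is what prevents any $\norm*{\mathcal{G}^\dagger}_\infty$ factor from appearing in the final bound.

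For the inductive step ($k \geq d_i$), I compare this expression with the actual output $\breve{y}_{i,t+k} = g_i^\top(\hat{\Psi}_{k-d_i}(\bar{\mathbf{u}}^*,\breve{\Xi}) + \hat{E}_{k-d_i}(\bar{\mathbf{u}}^*,\breve{\Xi}))$ obtained from \eqref{y_for_proofsA}. After substituting $H_{L+d_{\textup{max}}}(\hat{\Psi})_{[k-d_i]}\alpha^*(t) = \hat{\Psi}_{k-d_i}(\bar{\mathbf{u}}^*,\bar{\Xi}^*) + \sigma^*_{\Psi,k-d_i}(t)$, the difference splits into five pieces: first, a Lipschitz term $g_i^\top(\hat{\Psi}_{k-d_i}(\bar{\mathbf{u}}^*,\breve{\Xi}) - \hat{\Psi}_{k-d_i}(\bar{\mathbf{u}}^*,\bar{\Xi}^*))$ bounded by $\norm*{\mathcal{G}}_\infty K_\Psi\norm*{\breve{\Xi}_{t+k-d_i} - \bar{\Xi}^*_{k-d_i}(t)}_\infty$; second, an approximation term bounded by $\varepsilon^*(1+\norm*{\alpha^*(t)}_1)$ via $g_i^\top \hat{E} = \varepsilon_i$ and $|H(\varepsilon_i^{\textup{d}})\alpha^*(t)|\leq \varepsilon^*\norm*{\alpha^*(t)}_1$; third, a data-disturbance term bounded by $K_w w^*\norm*{\alpha^*(t)}_1$ via $g_i^\top \hat{D} = \delta_i$; fourth, a Hankel output-noise term bounded by $w^*\norm*{\alpha^*(t)}_1$; and fifth, slack contributions of order $(1+\norm*{\mathcal{G}}_\infty)\norm*{\sigma^*(t)}_\infty$ coming from the two blocks of \eqref{pc2_willems}. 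The Lipschitz term drives the recursion: since $\norm*{\breve{\Xi}_{t+k-d_i} - \bar{\Xi}^*_{k-d_i}(t)}_\infty$ is the maximum of output differences at earlier prediction indices, the induction hypothesis yields one extra factor of $K_\Xi$; combined with the fresh non-recursive constant $C$ gathering all the $\varepsilon^*$, $w^*$, $\sigma^*$ contributions, the recursion $a_k \leq C + K_\Xi a_{k-1}$ unwinds to $\mathcal{P}^{k+d_{\textup{max}}-d_i}(K_\Xi)\, C$, matching the claim.

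For the base case ($k \in [0,d_i-1]$), $\bar{y}^*_{i,k}(t)$ lies in $\bar{\Xi}^*_j(t)$ for $j \in [k-d_i+1,k]$; components with $j \in [-d_{\textup{max}},-1]$ are pinned by \eqref{pc2_ini} to $\tilde{y}_{i,t+j} = y_{i,t+j} + w_{i,t+j}$, and $y_{i,t+j} = \breve{y}_{i,t+j}$ by the definition of $\breve{y}$, so these entries already contribute only $w^*$ to the difference. For the remaining indices I iterate the Willems-plus-block-Brunovsky rewriting above until I reach a pinned initial block; since $d_i \leq d_{\textup{max}}$, the Lipschitz chain needed to connect $\bar{y}^*_{i,k}(t)$ to a fixed component has length at most $k + d_{\textup{max}} - d_i$, which is exactly the claimed polynomial degree at $k$.

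The main obstacle will be the careful index bookkeeping in the Hankel row identifications and the Lipschitz recursion, to ensure that the polynomial emerges at precisely degree $k + d_{\textup{max}} - d_i$ (neither one step too short nor too long) and that the fresh non-recursive constants from $\varepsilon^*$, $w^*$, and $\sigma^*$ line up exactly with the prefactors $(1+\norm*{\alpha^*(t)}_1)$, $(1+K_w)\norm*{\alpha^*(t)}_1$, and $(1+\norm*{\mathcal{G}}_\infty)$ claimed. The identity $\mathcal{G}\mathcal{G}^\dagger = I$ together with the definitions in \eqref{imp_defs} is the structural fact that absorbs the would-be $\norm*{\mathcal{G}^\dagger}_\infty$ factors into plain $\varepsilon^*$ and $K_w w^*$, producing the clean form of the bound stated in the lemma.
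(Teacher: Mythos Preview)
Your approach is essentially the same as the paper's: express $\bar{y}^*_{i,k}(t)$ via the Willems constraint \eqref{pc2_willems}, rewrite the noisy data columns through \eqref{y_for_proofsB}, substitute back the $\hat{\Psi}$-block using \eqref{pc2_willems} again, and obtain a one-step recursion driven by a Lipschitz term in $\norm*{\breve{\Xi}_{t+k-d_i}-\bar{\Xi}^*_{k-d_i}(t)}_\infty$, which unrolls to the polynomial $\mathcal{P}^{k+d_{\textup{max}}-d_i}(K_\Xi)$. Your identification of the five error contributions and of the role of $\mathcal{G}\mathcal{G}^\dagger=I$ in collapsing $g_i^\top\hat{E}$ and $g_i^\top\hat{D}$ to $\varepsilon_i$ and $\delta_i$ is exactly what the paper does.

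There is one small mismatch worth fixing. You bound the Lipschitz piece as $g_i^\top\big(\hat{\Psi}(\breve{\Xi})-\hat{\Psi}(\bar{\Xi}^*)\big)$ with constant $\norm*{\mathcal{G}}_\infty K_\Psi$, but then write the recursion with $K_\Xi$. These are not the same constant: $K_\Xi$ is the Lipschitz constant of $\Phi=\mathcal{G}\Psi+\scalebox{1.1}{$\epsilon$}$, not of $\mathcal{G}\Psi$. The paper avoids this by grouping the first two pieces as $\phi_i(\breve{\Xi})-\phi_i(\bar{\Xi}^*)$ (leaving the residual approximation term as $\varepsilon_i(\bar{\Xi}^*)$ rather than $\varepsilon_i(\breve{\Xi})$); then the recursion constant is literally $K_\Xi$ and the non-recursive constant is unchanged. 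With that regrouping your argument matches the paper's proof verbatim.
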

\begin{proof}
	The output obtained when applying $\bar{\mathbf{u}}_{[0,L-1]}^*(t)$ to the system are given by
	\begin{equation}
		\breve{y}_{i,t+k} \stackrel{\eqref{y_for_proofs}}{=} \phi_i(\bar{\mathbf{u}}_{k-d_i}^*(t),\breve{\Xi}_{t+k-d_i}),
	\end{equation}
	where $\bar{\mathbf{u}}_{[-d_i,-1]}^*(t)\stackrel{\eqref{pc2_ini}}{=}\mathbf{u}_{[t-d_i,t-1]}$. In contrast, the predicted outputs are given by
\begin{equation*}
	\begin{aligned}
		\hspace{0mm}\bar{y}_{i,k}^*(t) &\stackrel{\eqref{pc2_willems},\eqref{definition_of_plain_Xi}}{=}H_1({\tilde{y}_{i,[k+d_{\textup{max}},k+N-L]}^{\textup{d}}})\alpha^*(t) - \sigma_{\Xi,k}^*(t)\\
		&= H_{1}({{y}_{i,[k+d_{\textup{max}},k+N-L]}^{\textup{d}}})\alpha^*(t)+ H_{1}({w_{i,[k+d_{\textup{max}},k+N-L]}^{\textup{d}}})\alpha^*(t)- \sigma_{\Xi,k}^*(t)\\
		&\stackrel{\eqref{y_for_proofs}}{=}g_i^\top H_1\Big(\hat{\Psi}_{[k\hspace{-0.25mm}+d_{\textup{max}}\hspace{-0.25mm}-\hspace{-0.25mm}d_i,k\hspace{-0.25mm}+\hspace{-0.25mm}N\hspace{-0.25mm}-\hspace{-0.25mm}L\hspace{-0.25mm}-\hspace{-0.25mm}d_i]}(\mathbf{u}^{\textup{d}},\tilde{\Xi}^{\textup{d}})		+\hat{E}_{[k\hspace{-0.25mm}+d_{\textup{max}}\hspace{-0.25mm}-\hspace{-0.25mm}d_i,k\hspace{-0.25mm}+\hspace{-0.25mm}N\hspace{-0.25mm}-\hspace{-0.25mm}L\hspace{-0.25mm}-\hspace{-0.25mm}d_i]}(\mathbf{u}^{\textup{d}},\tilde{\Xi}^{\textup{d}})+ \hat{D}_{[k\hspace{-0.25mm}+d_{\textup{max}}\hspace{-0.25mm}-\hspace{-0.25mm}d_i,k\hspace{-0.25mm}+\hspace{-0.25mm}N\hspace{-0.25mm}-\hspace{-0.25mm}L\hspace{-0.25mm}-\hspace{-0.25mm}d_i]}(\omega^{\textup{d}})\Big)\alpha^*\hspace{-0.25mm}(t)\\
		&\quad+ H_{\hspace{-0.5mm}1}({w_{i,[k\hspace{-0.25mm}+d_{\textup{max}}\hspace{-0.25mm},k\hspace{-0.25mm}+\hspace{-0.25mm}N\hspace{-0.25mm}-\hspace{-0.25mm}L]}^{\textup{d}}})\alpha^*\hspace{-0.25mm}(t)- \sigma_{\Xi,k}^*(t)\\
		&\stackrel{\eqref{pc2_willems},\eqref{imp_defs}}{=} g_i^\top \left(\Psi(\mathbf{\bar{u}}_{k-d_i}^*(t),\bar{\Xi}_{k-d_i}^*(t)) + \sigma_{\Psi,k-d_i}^*(t)\right) + H_1(\hat{\varepsilon}_{i,[k+d_{\textup{max}}-d_i,k+N-L-d_i]}(\mathbf{u}^{\textup{d}},{{\Xi}^{\textup{d}}}))\alpha^*(t)\\
		&\quad + H_1(\hat{\delta}_{i,[k+d_{\textup{max}}-d_i,k+N-L-d_i]}(\omega^{\textup{d}}))\alpha^*(t)+ H_1({w_{i,[k+d_{\textup{max}},k+N-L]}^{\textup{d}}})\alpha^*(t)- \sigma_{\Xi,k}^*(t)\\
		&\stackrel{\eqref{noisy_basis}}{=} \phi_{i}(\mathbf{\bar{u}}_{k-d_i}^*(t),\bar{\Xi}_{k-d_i}^*(t)) - \hat{\varepsilon}_{i}(\mathbf{\bar{u}}_{k-d_i}^*(t),\bar{\Xi}_{k-d_i}^*(t)) + g_i^\top\sigma_{\Psi,k-d_i}^*(t)- \sigma_{\Xi,k}^*(t)\\
		&\quad + H_1(\hat{\varepsilon}_{i,[k+d_{\textup{max}}-d_i,k+N-L-d_i]}(\mathbf{u}^{\textup{d}},{{\Xi}^{\textup{d}}}))\alpha^*(t)+ H_1(\hat{\delta}_{i,[k+d_{\textup{max}}-d_i,k+N-L-d_i]}(\omega^{\textup{d}}))\alpha^*(t)\\
		&\quad+ H_1({w_{i,[k+d_{\textup{max}},k+N-L]}^{\textup{d}}})\alpha^*(t).
	\end{aligned}
\end{equation*}%
The error between the two outputs for $k\in\mathbb{Z}_{[0,L+d_i-1]}$ is
\begin{equation}
	\begin{aligned}
		\breve{y}_{i,t+k} - \bar{y}_{i,k}^*(t) &= {\phi_i({\mathbf{\bar{u}}}^*_{k-d_i}(t),\breve{\Xi}_{t+k-d_i})} - {\phi_{i}\left(\mathbf{\bar{u}}^*_{k-d_i}(t),\bar{\Xi}^*_{k-d_i}(t)\right)}+ \hat{\varepsilon}_{i}\left(\mathbf{\bar{u}}^*_{k-d_i}(t),\bar{\Xi}^*_{k-d_i}(t)\right) -g_i^\top\sigma_{\Psi,k-d_i}^*(t) +\sigma_{\Xi,k}^*(t) \\
		& \quad - H_1\left(\hat{\varepsilon}_{i,[k+d_{\textup{max}}-d_i,k+N-L-d_i]}(\mathbf{u}^{\textup{d}},{\tilde{\Xi}^{\textup{d}}})\right)\alpha^*(t)- H_1(\hat{\delta}_{i,[k+d_{\textup{max}}-d_i,k+N-L-d_i]}(\omega^{\textup{d}}))\alpha^*(t)\\
		& \quad- H_1({w_{i,[k+d_{\textup{max}},k+N-L]}^{\textup{d}}})\alpha^*(t).
	\end{aligned}\label{phis_dont_cancel_out}
\end{equation}
which, using Assumption \ref{bounded_err_assmp}, \eqref{slack_const} and continuity of $\phi_i(\cdot,\cdot)$, can be bounded by
\begin{align}
	\left|\breve{y}_{i,t+k} - \bar{y}_{i,k}^*(t)\right|&\leq K_{\Xi}\norm*{\breve{\Xi}_{t+k-d_i}-\bar{\Xi}_{k-d_i}^*(t)}_\infty\label{prel_lemma_part2_a} \hspace{-3mm} + \varepsilon^*(1+\norm*{\alpha^*(t)}_1) + (1 + K_w)w^*\norm*{\alpha^*(t)}_1 + (1+\norm*{\mathcal{G}}_\infty)\norm*{\sigma^*(t)}_\infty
\end{align}
Using similar induction steps as in \cite[Theorem 3]{Alsalti2022a}, it can be shown that \eqref{prel_lemma_part2_a} is equivalent to
\begin{align}
	\left|\breve{y}_{i,t+k} - \bar{y}_{i,k}^*(t)\right|&\leq\label{prel_lemma_part2}\mathcal{P}^{k + d_{\textup{max}} - d_i}(K_{\Xi})\Big(\varepsilon^*(1+\norm*{\alpha^*(t)}_1) + (1 + K_w)w^*\norm*{\alpha^*(t)}_1 + (1+\norm*{\mathcal{G}}_\infty)\norm*{\sigma^*(t)}_\infty\Big),
\end{align}
as in \eqref{output_difference} which completes the proof.
\end{proof}
\begin{remark}
	Notice that the bound in \eqref{output_difference} increases as $k$ increases. This is due to the polynomial term in $K_{\Xi}$. For that reason, when appropriate, we will use the bound on the last instant as the upper bound for the whole output difference sequence, i.e., $\norm*{\breve{y}_{i,[t,t+L+d_i-1]}-\bar{y}^*_{i,[0,L+d_i-1]}(t)}_\infty\leq\mathcal{P}^{L+d_{\textup{max}}-1}(K_{\Xi})\Big(\varepsilon^*(1+\norm*{\alpha^*(t)}_1) + (1 + K_w)w^*\norm*{\alpha^*(t)}_1 + (1+\norm*{\mathcal{G}}_\infty)\norm*{\sigma^*(t)}_\infty\Big)$. Furthermore, if $K_{\Xi}<1$, then the bound converges for increasing $L$.
\end{remark}
	\section{Lyapunov function candidate: Proof of lower and upper bounds}\label{lyap_sec}
In this section, we propose a Lyapunov function candidate and show that it is both locally lower and upper bounded. First, note that the system in \eqref{Lsys} is a controllable and observable system (since $\mathcal{A,B,C}$ are in block-Brunovksy canonical form). Hence, there exists a matrix $P=P^\top\succ 0$ such that $W_t=\norm*{\Xi_t}_P^2$ is an input-output-to-state stability (IOSS) Lyapunov function \cite{Cai08} that satisfies
\begin{equation}
	W_{t+d_{\textup{max}}} - W_t \leq -\frac{1}{2}\norm*{\Xi_{[t,t+d_{\textup{max}}-1]}}_2^2 + {c}_1\norm*{\mathbf{v}_{[t,t+d_{\textup{max}}-1]}}_2^2 + {c}_2\norm*{\mathbf{y}_{[t,t+d_{\textup{max}}-1]}}_2^2,
	\label{IOSS}
\end{equation}
for ${c}_1,{c}_2>0$. Similar to \cite[Lemma 1]{Berberich203}, we consider the following Lyapunov function candidate $V_t = J^*_t + c_3 W_t$, for some $c_3>0$. Clearly, the following lower bound holds
\begin{equation}
	\begin{aligned}
		V_t &= \underbrace{J^*_t}_{\geq 0} + c_3 \underbrace{W_t}_{\geq \lambda_{\text{min}}(P)\norm*{\Xi_t}_2^2}\\
		V_t &\geq c_3\lambda_{\text{min}}(P)\norm*{\Xi_t}_2^2.
	\end{aligned}
	\label{rob_lower_bnd}
\end{equation}
As for the upper bound, notice that
\begin{equation}
	\begin{aligned}
		V_t &= \underbrace{J^*_t}_{\text{by optimality } \leq J_t} + c_3\underbrace{\norm*{\Xi_t}_P^2}_{\leq\lambda_{\text{max}}(P)\norm*{\Xi_t}_2^2}\\
		&\leq \sum\limits_{k=0}^{L-1}\ell (\mathbf{\bar{u}}'(t),\mathbf{\bar{y}}'(t)) + \lambda_\alpha\max\{\varepsilon^*,w^*\}\norm*{\alpha'(t)}_2^2 + \lambda_\sigma\norm*{\sigma'(t)}_2^2 + c_3\lambda_{\text{max}}(P)\norm*{\Xi_t}_2^2.
	\end{aligned}
	\label{pre_V_upper_bnd}
\end{equation}
where $\mathbf{\bar{u}}'_{[-d_{\textup{max}},L-1]}(t),\bar{y}_{i,[-d_{\textup{max}},L+d_i-1]}'(t),\alpha'(t),\sigma'(t)$ are candidate solutions to \eqref{DDNMPC2} at time $t$. In the following we propose such candidate solutions and provide an expression for the upper bound on $V_t$. By the initial condition constraints \eqref{pc2_ini}, we have that
\begin{equation*}
	\begin{matrix}
		\mathbf{\bar{u}}_{[-d_{\textup{max}},-1]}'(t) = \mathbf{u}_{[t-d_{\textup{max}},t-1]}, &\qquad & 
		\bar{y}'_{i,[-d_{\textup{max}},-1]}(t) = y_{i,[t-d_{\textup{max}},t-1]}.\\
	\end{matrix}
\end{equation*}
Since $L\geq d_{\textup{max}}$ by Assumption \ref{horizon_rob_asmp} and assuming that $0\in\textup{int}(\mathcal{U})$, then by controllability of the system \eqref{Lsys}, there exists a $\delta>0$ such that for any state $\Xi_t\in\mathbb{B}_\delta\coloneqq\{\Xi_t\in\mathbb{R}^n~|~\norm*{\Xi_t}\leq\delta\}$, there exists a synthetic input $\mathbf{v}_{[t,t+L-1]}$, and hence a control input $\mathbf{u}_j\coloneqq\gamma(\Xi_j,\mathbf{v}_j)$ for $j\in\mathbb{Z}_{[t,t+L-1]}$ with $\mathbf{u}_j\in\mathcal{U}$, which brings the state $\Xi_{[t,t+L]}$ (defined by the corresponding outputs $y_{i,[t,t+L+d_i-1]}$ as in \eqref{definition_of_plain_Xi}) to zero in $L$ steps\footnote{Notice that $d_{\textup{max}}$ is the controllability index of the system in \eqref{Lsys}.}, while satisfying (for some $\Gamma_{v}>0$)
\begin{equation}\scalebox{1}{$
		\norm*{\begin{bmatrix}
				\mathbf{v}_{[t,t+L-1]} \\ \Xi_{[t,t+L]}
		\end{bmatrix}}_2^2   \leq \Gamma_{v}\norm*{\Xi_t}_2^2$}\label{ctrb_argument_t}.
\end{equation}
Furthermore, notice that the function $\gamma(\cdot,\cdot)$ is locally Lipschitz continuous\footnote{This result is a by-product of the implicit function theorem which was used to show the existence of the feedback linearizing controller $\mathbf{u}_k$ (cf. \cite{Monaco87})}. Therefore, there exists a $K_\gamma>0$ such that the following holds $\norm*{\mathbf{u}_j}_2^2\leq K_\gamma \left(\norm*{\mathbf{v}_j}_2^2 + \norm*{\Xi_j}_2^2\right)$ for all $j\in\mathbb{Z}_{[t,t+L-1]}$ in the set $\Omega$. More generally,
\begin{equation}
	\begin{aligned}
		\norm*{\mathbf{u}_{[t,t+L-1]}}_2^2 \leq K_\gamma\left(\norm*{\mathbf{v}_{[t,t+L-1]}}_2^2+\norm*{\Xi_{[t,t+L-1]}}_2^2\right) = K_\gamma \norm*{\begin{bmatrix}\mathbf{v}_{[t,t+L-1]} \\ \Xi_{[t,t+L-1]}\end{bmatrix}}_2^2 \stackrel{\eqref{ctrb_argument_t}}{\leq} K_\gamma \Gamma_v \norm*{\Xi_t}_2^2.
	\end{aligned}\label{continuity_bnd_on_u}
\end{equation}
Since such an input sequence $\mathbf{u}_{[t,t+L-1]}$ exists, we choose it to be the candidate input sequence at time $t$, i.e.,
\begin{equation}
	\mathbf{\bar{u}}_{[-d_{\textup{max}},L-1]}'(t) = \mathbf{u}_{[t-d_{\textup{max}},t+L-1]}.
	\label{input_candidate_at_t}
\end{equation}
As for the candidate output sequence at time $t$, we make the following choice
\begin{equation}
	\begin{aligned}
		&\bar{y}'_{i,[-d_{\textup{max}},-1]}(t) \stackrel{\eqref{pc2_ini}}{=} \tilde{y}_{i,[t-d_{\textup{max}},t-1]} = {y}_{i,[t-d_{\textup{max}},t-1]} + {w}_{i,[t-d_{\textup{max}},t-1]},\\
		&\bar{y}'_{i,[0,L-1]}(t) = v_{i,[t-d_i,t+L-d_i-1]} \stackrel{\eqref{y_for_proofsA}}{=} y_{i,[t,t+L-1]},\\
		&\bar{y}'_{i,[L,L+d_i-1]}(t) \stackrel{\eqref{pc2_term}}{=} \mathbf{0}.
	\end{aligned}
	\label{output_candidate_at_t}
\end{equation}
Next, we choose a candidate solution for $\alpha'(t),\,\sigma'(t)$. Recall that $\{\hat{\Psi}_k(\mathbf{u}^{\textup{d}},\tilde{\Xi}^{\textup{d}})\}_{k=0}^{N-1}$ is persistently exciting as in Assumption~\ref{PE_asmp}. For sufficiently small $\varepsilon^*$ and $w^*$ this implies persistency of excitation of $\{\hat{\Psi}_k(\mathbf{u}^{\textup{d}},\tilde{\Xi}^{\textup{d}})+\hat{E}_k(\mathbf{u}^{\textup{d}},\tilde{\Xi}^{\textup{d}})+\hat{D}_k(\omega^{\textup{d}})\}_{k=0}^{N-1}$ of the same order (see \cite[Lemma 2]{Alsalti2022a}). The latter sequence represents the input to the LTI system in transformed coordinates \eqref{Lsys3}. Therefore, by persistency of excitation, the fundamental lemma \cite{Willems05} gives a data-based representation of all the trajectories of system \eqref{Lsys3}. In particular, there exists $\alpha_t\in\mathbb{R}^{N-L-d_{\textup{max}}+1}$ such that
\begin{equation}
	\begin{bmatrix}
		H_{L+d_{\textup{max}}}(\hat{\Psi}(\mathbf{u}^{\textup{d}},\tilde{\Xi}^{\textup{d}}) + \hat{E}(\mathbf{u}^{\textup{d}},\tilde{\Xi}^{\textup{d}}) + \hat{D}(\omega^{\textup{d}}) ) \\ H_1(\Xi_{[0,N-L-d_{\textup{max}}]}^{\textup{d}})
	\end{bmatrix} \alpha_t = \begin{bmatrix}
	\hat{\Psi}(\mathbf{u},{\Xi}) + \hat{E}(\mathbf{{u}},{\Xi}) + \hat{D}({\omega})\\ {\Xi}_{t-d_{\textup{max}}}
\end{bmatrix},\label{alpha_t_exists}
\end{equation}
where $\mathbf{u} = \mathbf{u}_{[t-d_{\textup{max}},t+L-1]},\,\Xi=\Xi_{[t-d_{\textup{max}},t+L-1]},\,\omega = \begin{bmatrix}\omega_{[t-d_{\textup{max}},t-1]}^\top & \mathbf{0}\end{bmatrix}^\top$ and
\begin{equation*}
	\Xi_{t-d_{\textup{max}}} \coloneqq \begin{bmatrix}y_{1,[t-d_{\textup{max}},t-d_{\textup{max}}+d_1-1]}^\top & \dots & y_{m,[t-d_{\textup{max}},t-d_{\textup{max}}+d_m-1]}^\top\end{bmatrix}^\top,
\end{equation*}
The terms $\hat{D}(\omega^{\textup{d}})$ and $\hat{D}({\omega})$ in \eqref{alpha_t_exists} are defined in a similar way as in \eqref{y_for_proofsB}. For the candidate solution of $\alpha'(t)$, we choose it to be the one satisfying \eqref{alpha_t_exists}, i.e.,
\begin{equation}
	\alpha'(t) = \begin{bmatrix}
		H_{L+d_{\textup{max}}}(\hat{\Psi}(\mathbf{u}^{\textup{d}},\tilde{\Xi}^{\textup{d}}) + \hat{E}(\mathbf{u}^{\textup{d}},\tilde{\Xi}^{\textup{d}}) + \hat{D}(\omega^{\textup{d}}) ) \\ H_1(\Xi_{[0,N-L-d_{\textup{max}}]}^{\textup{d}})
	\end{bmatrix}^{\dagger}\begin{bmatrix}
		\hat{\Psi}(\mathbf{\bar{u}}'(t),\Xi) + \hat{E}(\mathbf{\bar{u}}'(t),\Xi) + \hat{D}({\omega})\\ {\Xi}_{t-d_{\textup{max}}}
	\end{bmatrix},\label{candidate_alpha_t}
\end{equation}
which also satisfies (by linearity of System \eqref{Lsys3})
\begin{equation}
	\begin{bmatrix}
		H_{L+d_{\textup{max}}}(\hat{\Psi}(\mathbf{u}^{\textup{d}},\tilde{\Xi}^{\textup{d}}) + \hat{E}(\mathbf{u}^{\textup{d}},\tilde{\Xi}^{\textup{d}}) + \hat{D}(\omega^{\textup{d}}) ) \\ H_{L+d_{\textup{max}}+1}(\Xi^{\textup{d}})
	\end{bmatrix}\alpha'(t) = \begin{bmatrix}
		\hat{\Psi}(\mathbf{\bar{u}}'(t),{\Xi}) + \hat{E}(\mathbf{\bar{u}}'(t),{\Xi}) + \hat{D}({\omega})\\ \begin{bmatrix}\Xi_{[t-d_{\textup{max}},t+L-1]}^\top&\mathbf{0}\end{bmatrix}^\top
	\end{bmatrix},\label{candidate_alpha_t_satisfied_whole_sequence}
\end{equation}
The choice of a candidate solution for $\sigma'(t)$ is now obtained by plugging \eqref{candidate_alpha_t} into \eqref{pc2_willems}. This is given by
\begin{equation}
	\sigma'(t) = \begin{bmatrix}
		\sigma'_{\Psi}(t) \\ \sigma'_{\Xi}(t)
	\end{bmatrix} = \begin{bmatrix}
	\hat{\Psi}(\mathbf{\bar{u}}'(t),\Xi) - \hat{\Psi}(\mathbf{\bar{u}}'(t),\bar{\Xi}'(t)) + 	\hat{E}(\mathbf{\bar{u}}'(t),{\Xi}) + \hat{D}({\omega}) - H_{L+d_{\textup{max}}}( \hat{E}(\mathbf{u}^{\textup{d}},\tilde{\Xi}^{\textup{d}}) + \hat{D}(\omega^{\textup{d}}) )\alpha'(t) \\
	H_{L+d_{\textup{max}}+1}(\omega^{\textup{d}})\alpha'(t) - \begin{bmatrix}
		\omega_{[t-d_{\textup{max}}, t-1]}\\ \mathbf{0}\\ \mathbf{0}
	\end{bmatrix}
\end{bmatrix},
	\label{candidate_sigma_t}
\end{equation}
where, by \eqref{input_candidate_at_t}, we have $\mathbf{u}_{[t-d_{\textup{max}},t+L-1]} = \mathbf{\bar{u}}'_{[-d_{\textup{max}},L-1]}(t)$, whereas
\begin{equation*}
	\bar{\Xi}'_{[-d_{\textup{max}},L]}(t) = \begin{bmatrix}
		\Xi_{[t-d_{\textup{max}}, t-1]} + \omega_{[t-d_{\textup{max}},t-1]} \\
		\Xi_{[t,t+L-1]}\\
		\textbf{0}
	\end{bmatrix} = \begin{bmatrix}
		\Xi_{[t-d_{\textup{max}}, t-1]}\\
		\Xi_{[t,t+L-1]}\\
		\textbf{0}
	\end{bmatrix} + \begin{bmatrix}
		\omega_{[t-d_{\textup{max}},t-1]}\\
		\mathbf{0}\\ \mathbf{0}
	\end{bmatrix}.
\end{equation*}
Now we can start deriving the desired upper bounds for the terms in \eqref{pre_V_upper_bnd}. Recall that we are considering stage cost functions that penalize the deviation from $\mathbf{u}^s=\mathbf{y}^s=\mathbf{0}$. Therefore, the summation term in \eqref{pre_V_upper_bnd} can be expressed as
\begin{equation}
	\begin{aligned}
		\sum\limits_{k=0}^{L-1}\ell(\mathbf{\bar{u}}'_k(t),\mathbf{\bar{y}}'_k(t)) &= \sum\limits_{k=0}^{L-1}\left(\norm*{\mathbf{\bar{u}}'_k(t)}_R^2+\norm*{\mathbf{\bar{y}}'_k(t)}_Q^2\right) \leq \lambda_{\max}(Q,R)\norm*{\begin{bmatrix}\mathbf{\bar{u}}'_{[0,L-1]}(t)\\ \mathbf{\bar{y}}'_{[0,L-1]}(t)\end{bmatrix}}_2^2\\
		& = \lambda_{\textup{max}}(Q,R)\left(\norm*{\mathbf{\bar{u}}'_{[0,L-1]}(t)}_2^2+\norm*{\mathbf{\bar{y}}'_{[0,L-1]}(t)}_2^2\right)\\
		&\stackrel{\eqref{continuity_bnd_on_u},\eqref{output_candidate_at_t}}{\leq} \lambda_{\max}(Q,R)\left(K_\gamma \norm*{\begin{bmatrix}\mathbf{v}_{[t,t+L-1]}\\ \Xi_{[t,t+L-1]}\end{bmatrix}}_2^2 + \norm*{\begin{bmatrix}{v}_{1,[t-d_{1},t+L-d_1-1]} \\ \vdots \\ {v}_{m,[t-d_{m},t+L-d_m-1]}\end{bmatrix}}_2^2\right)\\
		&\stackrel{\eqref{continuity_bnd_on_u}}{\leq} \lambda_{\max}(Q,R)\left(K_\gamma\Gamma_v\norm*{\Xi_t}_2^2 + \norm*{\begin{bmatrix}{v}_{1,[t-d_{1},t-1]} \\ \vdots \\ {v}_{m,[t-d_{m},t-1]}\end{bmatrix}}_2^2 + \norm*{\begin{bmatrix}{v}_{1,[t,t+L-d_1-1]} \\ \vdots \\ {v}_{m,[t,t+L-d_m-1]}\end{bmatrix}}_2^2\right)\\
		&\stackrel{\eqref{y_for_proofsA}}{=} \lambda_{\max}(Q,R)\left(K_\gamma\Gamma_{v}\norm*{\Xi_t}_2^2 + \norm*{\begin{bmatrix}{y}_{1,[t,t+d_1-1]} \\ \vdots \\ {y}_{m,[t,t+d_m-1]}\end{bmatrix}}_2^2 + \norm*{\begin{bmatrix}{v}_{1,[t,t+L-d_1-1]} \\ \vdots \\ {v}_{m,[t,t+L-d_m-1]}\end{bmatrix}}_2^2\right)\\
		&= \lambda_{\max}(Q,R)\left(K_\gamma\Gamma_{v}\norm*{\Xi_t}_2^2 + \norm*{\Xi_t}_2^2 + \norm*{\mathbf{v}_{[t,t+L-1]}}_2^2\right)\\
		&\stackrel{\eqref{ctrb_argument_t}}{\leq} \lambda_{\max}(Q,R)\left(K_\gamma\Gamma_{v}\norm*{\Xi_t}_2^2 + \norm*{\Xi_t}_2^2 + \Gamma_v\norm*{\Xi_t}_2^2\right)\\
		&= \lambda_{\max}(Q,R)((K_\gamma+1)\Gamma_v+1)\norm*{\Xi_t}_2^2.
	\end{aligned}\label{ub_sum_t}
\end{equation}
 For an upper bound on $\norm*{\alpha'(t)}_2^2$, we use \eqref{candidate_alpha_t} as follows
	\begin{align}
		\alpha'(t) &= \begin{bmatrix}
			H_{L+d_{\textup{max}}}(\hat{\Psi}(\mathbf{u}^{\textup{d}},\tilde{\Xi}^{\textup{d}}) + \hat{E}(\mathbf{u}^{\textup{d}},\tilde{\Xi}^{\textup{d}}) + \hat{D}(\omega^{\textup{d}}) ) \\ H_1(\Xi_{[0,N-L-d_{\textup{max}}]}^{\textup{d}})
		\end{bmatrix}^{\dagger}\begin{bmatrix}
			\hat{\Psi}(\mathbf{\bar{u}}'(t),\Xi) + \hat{E}(\mathbf{\bar{u}}'(t),\Xi) + \hat{D}({\omega})\\ {\Xi}_{t-d_{\textup{max}}}
		\end{bmatrix},\label{prepre_ub_candidate_alpha_t_a}\\
		\norm*{\alpha'(t)}_2^2 &\leq \underbrace{\norm*{\begin{bmatrix}
					H_{L+d_{\textup{max}}}(\hat{\Psi}(\mathbf{u}^{\textup{d}},\tilde{\Xi}^{\textup{d}}) + \hat{E}(\mathbf{u}^{\textup{d}},\tilde{\Xi}^{\textup{d}}) + \hat{D}(\omega^{\textup{d}}) ) \\ H_1(\Xi_{[0,N-L-d_{\textup{max}}]}^{\textup{d}})
				\end{bmatrix}^{\dagger}}_2^2}_{\coloneqq c_{\textup{pe}}} \norm*{\begin{bmatrix}
			\hat{\Psi}(\mathbf{\bar{u}}'(t),\Xi) + \hat{E}(\mathbf{\bar{u}}'(t),\Xi) + \hat{D}({\omega})\\ {\Xi}_{t-d_{\textup{max}}}
		\end{bmatrix}}_2^2.\label{prepre_ub_candidate_alpha_t_b}
	\end{align}
As we will see later in the proof, the constant $c_{\textup{pe}}$ is desired to be small. In \cite{Berberich203}, it was observed that large magnitudes of $\mathbf{u}^{\textup{d}}$ correspond to smaller values of $c_{\textup{pe}}$. Such observation cannot, in general, be made in the current nonlinear setting since this depends on the basis functions, which in turn depend not only on the input but on the state as well. However, such behavior was observed for some choice of $\Psi$, e.g., monomial functions in the input ($\mathbf{u}, \mathbf{u}^2, ...$) where there is a direct relationship between $\Psi$ and $\mathbf{u}$. To obtain a bound on $\norm*{\alpha'(t)}_2^2$, we notice that the rightmost term on the RHS of \eqref{prepre_ub_candidate_alpha_t_a} can be upper bounded as follows
\begin{equation}
	\begin{aligned}
		\norm*{\begin{bmatrix}
				\hat{\Psi}(\mathbf{\bar{u}}'(t),\Xi) + \hat{E}(\mathbf{\bar{u}}'(t),\Xi) + \hat{D}({\omega})\\ {\Xi}_{t-d_{\textup{max}}}
		\end{bmatrix}}_2^2 &= \norm*{
		\hat{\Psi}(\mathbf{\bar{u}}'(t),\Xi) + \hat{E}(\mathbf{\bar{u}}'(t),\Xi) + \hat{D}({\omega})}_2^2 + \norm*{{\Xi}_{t-d_{\textup{max}}}}_2^2\\
	&\leq 2\norm*{\hat{\Psi}(\mathbf{\bar{u}}'(t),\Xi)}_2^2 + 4\norm*{\hat{E}(\mathbf{\bar{u}}'(t),\Xi)}_2^2 + 4\norm*{\hat{D}({\omega})}_2^2 + \norm*{{\Xi}_{t-d_{\textup{max}}}}_2^2
	\end{aligned}\label{bounding_the_vector_on_RHS_of_alpha}
\end{equation}
where the inequality was obtained by applying the triangular inequality to the first term along with $(a+b+c)^2 \leq 2a^2 + 4b^2 + 4c^2$ for $a,b,c\in\mathbb{R}$.
Notice that the second and third terms are bounded by Assumption \ref{bounded_err_assmp} and boundedness of the noise, respectively, while the first term can be bounded as follows (by Lipschitz continuity of the basis functions)
\begin{equation}
	\begin{aligned}
		\norm*{\hat{\Psi}(\mathbf{\bar{u}}'(t),\Xi)}_2 &= \norm*{\hat{\Psi}(\mathbf{\bar{u}}'(t),\Xi) - \hat{\Psi}(\mathbf{\bar{u}}'(t),\mathbf{0}) + \hat{\Psi}(\mathbf{\bar{u}}'(t),\mathbf{0})}_2\\
		&\leq \norm*{\hat{\Psi}(\mathbf{\bar{u}}'(t),\Xi) - \hat{\Psi}(\mathbf{\bar{u}}'(t),\mathbf{0})}_2 + \norm*{\hat{\Psi}(\mathbf{\bar{u}}'(t),\mathbf{0})}_2\\
		&\leq K_\Psi \norm*{\Xi}_2 + \norm*{\hat{\Psi}(\mathbf{\bar{u}}'(t),\mathbf{0})}_2,
	\end{aligned}\label{bound_on_basis_functions_evaluated_at_candidate_input_at_time_t}
\end{equation}
where $K_\Psi$ is the Lipschitz constant of the basis functions w.r.t. the second argument in the compact subset $\Omega$ and $\Xi=\Xi_{[t-d_{\textup{max}},t+L-1]}$ as denoted earlier. Notice that the term $\mu\coloneqq\norm*{\hat{\Psi}(\mathbf{\bar{u}}'(t),\mathbf{0})}_2$ is uniformly bounded since it corresponds to evaluating the basis functions at the candidate inputs (which belong to the compact set $\mathcal{U}$ by \eqref{rob_input_const_set}). The bound \eqref{bound_on_basis_functions_evaluated_at_candidate_input_at_time_t} can be written as
\begin{equation}
	\begin{aligned}
		\norm*{\hat{\Psi}(\mathbf{\bar{u}}'(t),\Xi)}_2 &\leq K_\Psi \norm*{\Xi_{[t-d_{\textup{max}},t+L-1]}}_2 + \mu \\
		&= K_\Psi \left(\norm*{\Xi_{[t-d_{\textup{max}},t-1]}}_2 + \norm*{\Xi_{[t,t+L-1]}}_2\right) + \mu.
	\end{aligned}\label{bound_on_basis_functions_for_alpha}
\end{equation}
By linearity and observability of System \eqref{Lsys}, there exists a $\Gamma_\Xi>0$ such that
\begin{align}
	\norm*{\begin{bmatrix}
			\mathbf{v}_{[t-d_{\textup{max}},t-1]} \\ \Xi_{[t-d_{\textup{max}},t-1]}
	\end{bmatrix}}_2^2 &\leq\Gamma_\Xi\norm*{\Xi_t}_2^2.\label{upsilon_norm_1}
\end{align}
Hence, by \eqref{upsilon_norm_1} and the controllability arguments made in \eqref{ctrb_argument_t}, one can rewrite \eqref{bound_on_basis_functions_for_alpha} as
\begin{equation}
	\begin{aligned}
		\norm*{\hat{\Psi}(\mathbf{\bar{u}}'(t),\Xi)}_2 &\leq K_\Psi \left(\sqrt{\Gamma_\Xi} + \sqrt{\Gamma_v}\right)\norm*{\Xi_t}_2 + \mu.
	\end{aligned}
\end{equation}
Plugging these bounds back into \eqref{bounding_the_vector_on_RHS_of_alpha}, and using \eqref{tilde_E_bdd_asmp} and boundedness of the noise, we obtain
\begin{equation}
	\begin{aligned}
		&\norm*{\begin{bmatrix}
				\hat{\Psi}(\mathbf{\bar{u}}'(t),\Xi) + \hat{E}(\mathbf{\bar{u}}'(t),\Xi) + \hat{D}({\omega})\\ {\Xi}_{t-d_{\textup{max}}}
		\end{bmatrix}}_2^2 \\
	&\leq 2\left(K_\Psi \left(\sqrt{\Gamma_\Xi} + \sqrt{\Gamma_v}\right)\norm*{\Xi_t}_2 + \mu\right)^2 + 4r(L + d_{\textup{max}})\norm*{\mathcal{G}^\dagger}_\infty^2((\varepsilon^*)^2+(K_w w^*)^2) + \Gamma_\Xi\norm*{{\Xi}_{t}}_2^2\\
	&\leq 4\mu^2 + 4r(L + d_{\textup{max}})\norm*{\mathcal{G}^\dagger}_\infty^2((\varepsilon^*)^2+(K_w w^*)^2) + \left(8K_\Psi(\Gamma_\Xi+\Gamma_v)+\Gamma_\Xi\right)\norm*{\Xi_t}_2^2,
	\end{aligned}\label{how_we_now_bound_RHS_vector}
\end{equation}
where the last inequality was obtained by making use of $(a+b)^2 \leq 2a^2 + 2b^2$ for $a,b\in\mathbb{R}$. Plugging this into \eqref{prepre_ub_candidate_alpha_t_b}, we obtain an upper bound on $\norm*{\alpha'(t)}_2^2$ as
\begin{equation}
	\begin{aligned}
		\norm*{\alpha'(t)}_2^2 &\leq c_{\textup{pe}} \mu_1 + c_{\textup{pe}} \mu_2 \norm*{\Xi_t}_2^2,
	\end{aligned}\label{ub_alpha_t}
\end{equation}
where $\mu_1 \coloneqq 4\mu^2 + 4r(L + d_{\textup{max}})\norm*{\mathcal{G}^\dagger}_\infty^2((\varepsilon^*)^2+(K_w w^*)^2)$ and $\mu_2 \coloneqq 8K_\Psi(\Gamma_\Xi+\Gamma_v)+\Gamma_\Xi$.
Similarly, we use the definition of $\sigma'(t)$ in \eqref{candidate_sigma_t} to provide an upper bound on its norm. First, recall that
\begin{equation}
	\sigma'_{\Psi}(t) = \hat{\Psi}(\mathbf{\bar{u}}'(t),\Xi) - \hat{\Psi}(\mathbf{\bar{u}}'(t),\bar{\Xi}'(t)) + 	\hat{E}(\mathbf{\bar{u}}'(t),{\Xi}) + \hat{D}({\omega}) - H_{L+d_{\textup{max}}}( \hat{E}(\mathbf{u}^{\textup{d}},\tilde{\Xi}^{\textup{d}}) + \hat{D}(\omega^{\textup{d}}) )\alpha'(t),
\end{equation}
which, by Assumption \ref{bounded_err_assmp} as well as the boundedness of the output noise and local Lipschitz continuity of the basis functions, can be upper bounded by
\begin{equation}
	\begin{aligned}
		\norm*{\sigma'_{\Psi}(t)}_\infty &\leq \norm*{\hat{\Psi}(\mathbf{\bar{u}}'(t),\Xi) - \hat{\Psi}(\mathbf{\bar{u}}'(t),\bar{\Xi}'(t))}_{\infty} + \norm*{\hat{E}(\mathbf{\bar{u}}'(t),{\Xi})}_\infty + \norm*{\hat{D}({\omega})}_\infty + \norm*{H_{L+d_{\textup{max}}}( \hat{E}(\mathbf{u}^{\textup{d}},\tilde{\Xi}^{\textup{d}}) + \hat{D}(\omega^{\textup{d}}) )\alpha'(t)}_\infty\\
		&\leq K_{\Psi}\norm*{\Xi - \bar{\Xi}'(t)}_{\infty} + \norm*{\mathcal{G}^\dagger}_\infty\varepsilon^* + \norm*{\mathcal{G}^\dagger}_\infty K_w w^* + (\norm*{\mathcal{G}^\dagger}_\infty\varepsilon^* + \norm*{\mathcal{G}^\dagger}_\infty K_w w^*)\norm*{\alpha'(t)}_1\\
		&\leq K_{\Psi}w^* + (\varepsilon^* + K_w w^*)\norm*{\mathcal{G}^\dagger}_\infty(1+\norm*{\alpha'(t)}_1),
	\end{aligned}\label{bound_on_sigma_psi}
\end{equation}
for $K_\Psi>0$. Similarly, for $\sigma_{\Xi}'(t)$ we have
\begin{equation}
	\norm*{\sigma_{\Xi}'(t)}_\infty \leq w^*(1+\norm*{\alpha'(t)}_1).\label{bound_on_sigma_xi}
\end{equation}
Therefore, we have the following bound
\begin{equation}
	\begin{aligned}
		\norm*{\sigma'(t)}_{\infty} &= \max\{\norm*{\sigma_\Psi'(t)}_{\infty} , \norm*{\sigma_\Xi'(t)}_{\infty} \}\\
		&\leq K_{\Psi}w^* + (\varepsilon^* + K_w w^*)\norm*{\mathcal{G}^\dagger}_\infty(1+\norm*{\alpha'(t)}_1).
	\end{aligned}\label{candidate_sigma_at_t_satisfies_slack_const}
\end{equation}
Clearly, this implies that \eqref{slack_const} is satisfied. To bound $V_t$ in \eqref{pre_V_upper_bnd}, we need an upper bound on $\norm*{\sigma'(t)}_2^2$. For that we write
\begin{equation}
	\begin{aligned}
		\norm*{\sigma'(t)}_2^2 &\leq \left((r+n)(L+d_{\textup{max}}) + n\right) \norm*{\sigma'(t)}_\infty^2\\
		&\leq \left((r+n)(L+d_{\textup{max}}) + n\right) \left(K_{\Psi}w^* + (\varepsilon^* + K_w w^*)\norm*{\mathcal{G}^\dagger}_\infty(1+\norm*{\alpha'(t)}_1)\right)^2\\
		&\leq \left((r+n)(L+d_{\textup{max}}) + n\right)\left(2 (K_{\Psi}w^*)^2 + 2(\varepsilon^* + K_w w^*)^2\norm*{\mathcal{G}^\dagger}_\infty^2(2+2\norm*{\alpha'(t)}_1^2)\right)\\
		&= 	2\left((r+n)(L+d_{\textup{max}}) + n\right)\left((K_{\Psi}w^*)^2 + 2(\varepsilon^* + K_w w^*)^2\norm*{\mathcal{G}^\dagger}_\infty^2(1+\norm*{\alpha'(t)}_1^2)\right)
	\end{aligned}
\label{ub_sigma_t}
\end{equation}
where an upper bound on $\norm*{\alpha'(t)}_2^2$ can be obtained from \eqref{ub_alpha_t}. Finally, we substitute \eqref{ub_sum_t},\eqref{ub_alpha_t},\eqref{ub_sigma_t} back into \eqref{pre_V_upper_bnd} to obtain the following result
\begin{equation}
	\begin{aligned}
		V_t \leq c_4 \norm*{\Xi_t}_2^2 + c_5
	\end{aligned}\label{ub_V_t}
\end{equation}
where
\begin{equation*}
	\begin{aligned}
		c_4 &\coloneqq \lambda_{\max}(Q,R)((K_\gamma+1)\Gamma_v+1) +c_3\lambda_{\textup{max}}(P)\\
		&+ \left(\lambda_\alpha\max\{\varepsilon^*,w^*\} + 2\lambda_\sigma\left((r+n)(L+d_{\textup{max}}) + n\right)\left((K_{\Psi}w^*)^2 + 2(\varepsilon^* + K_w w^*)^2\norm*{\mathcal{G}^\dagger}_\infty^2\right)\right) c_{\textup{pe}} \mu_2,\\
		c_5 &\coloneqq 2\lambda_\sigma\left((r+n)(L+d_{\textup{max}}) + n\right)\left((K_{\Psi}w^*)^2 + 2(\varepsilon^* + K_w w^*)^2\norm*{\mathcal{G}^\dagger}_\infty^2\right) + \lambda_\alpha\max\{\varepsilon^*,w^*\}c_{\textup{pe}}\mu_1.
	\end{aligned}
\end{equation*}
Notice that $c_5$ has the property that it goes to zero as $\max\{\varepsilon^*,w^*\}\to0$.
	\section{Recursive feasibility}\label{rec_feas_sec}
In this section, we show that if the robust data-driven nonlinear predictive control scheme \eqref{DDNMPC2} is feasible at time $t$, then it is feasible at time $t+d_{\textup{max}}$, given that $\varepsilon^*$ and $w^*$ are small enough. Theorem \ref{recrusive_feasibility_theorem} below follows similar ideas as in \cite[Proposition 1]{Berberich203}. Showing recursive feasibility for all time $t\in\mathbb{N}$ will follow in Section \ref{prac_stb_sec} when it is shown that the sub-level set defined by $V_{ROA}$ is invariant.
\begin{theorem}\label{recrusive_feasibility_theorem}
	For any $V_{ROA}>0$, there exists an $\bar{\varepsilon}>0$ and $\bar{w}>0$ such that for all $\varepsilon^*\leq\bar{\varepsilon}$ and $w^*\leq\bar{w}$, if $V_t\leq V_{ROA}$ for $t\geq0$, then the optimization problem \eqref{DDNMPC2} is feasible at time $t+d_{\textup{max}}$.
\end{theorem}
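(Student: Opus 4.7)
The plan is to prove recursive feasibility by constructing an explicit candidate solution to \eqref{DDNMPC2} at time $t+d_{\textup{max}}$ and verifying that it satisfies every constraint when $\varepsilon^*$ and $w^*$ are sufficiently small. The construction follows the standard MPC shift-plus-extension template, suitably adapted to the present nonlinear and uncertain setting. Crucially, the assumption $V_t\leq V_{ROA}$, combined with the penalty terms $\lambda_\alpha\max\{\varepsilon^*,w^*\}\norm*{\alpha^*(t)}_2^2$ and $\lambda_\sigma\norm*{\sigma^*(t)}_2^2$ present in \eqref{opt_val_fcn_def}, provides uniform bounds on $\norm*{\alpha^*(t)}_2$ and $\norm*{\sigma^*(t)}_2$ that will be used repeatedly.

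For the candidate input I would set $\bar{\mathbf{u}}'_k(t+d_{\textup{max}})=\bar{\mathbf{u}}^*_{k+d_{\textup{max}}}(t)$ for $k\in\mathbb{Z}_{[-d_{\textup{max}},L-d_{\textup{max}}-1]}$, so the past-data constraint \eqref{pc2_ini} and the input constraint \eqref{rob_input_const_set} on this range are inherited from the closed-loop trajectory. For $k\in\mathbb{Z}_{[L-d_{\textup{max}},L-1]}$, I would invoke the controllability argument of \eqref{ctrb_argument_t} at the actual state $\breve{\Xi}_{t+L}$ (reached when the shifted optimal input is applied to the true system) to steer it to $\mathbf{0}$ in exactly $d_{\textup{max}}$ steps; since $\mathbf{u}^s=\mathbf{0}\in\textup{int}(\mathcal{U})$, this extension lies in $\mathcal{U}$ as long as $\breve{\Xi}_{t+L}$ lies in the controllability ball $\mathbb{B}_\delta$. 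The candidate outputs would be the actual outputs produced by this input, so the terminal constraint \eqref{pc2_term} is satisfied by construction. Finally, $\alpha'(t+d_{\textup{max}})$ and $\sigma'(t+d_{\textup{max}})$ would be chosen exactly as in \eqref{candidate_alpha_t}--\eqref{candidate_sigma_t}, which makes \eqref{pc2_willems} hold by definition and yields the slack bound \eqref{slack_const} via the same computation as in \eqref{bound_on_sigma_psi}--\eqref{candidate_sigma_at_t_satisfies_slack_const}.

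The quantitative core of the argument is to show $\breve{\Xi}_{t+L}\in\mathbb{B}_\delta$ for $\varepsilon^*,w^*$ small enough. By the terminal constraint \eqref{pc2_term} the predicted state satisfies $\bar{\Xi}^*_L(t)=\mathbf{0}$, so Lemma \ref{lemma_output_difference} yields $\norm*{\breve{\Xi}_{t+L}}_\infty\leq\mathcal{P}^{L+d_{\textup{max}}-1}(K_\Xi)\bigl(\varepsilon^*(1+\norm*{\alpha^*(t)}_1)+(1+K_w)w^*\norm*{\alpha^*(t)}_1+(1+\norm*{\mathcal{G}}_\infty)\norm*{\sigma^*(t)}_\infty\bigr)$. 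From $V_t\leq V_{ROA}$ and the $\lambda_\alpha$-penalty one obtains $\norm*{\alpha^*(t)}_1\leq C\sqrt{V_{ROA}/(\lambda_\alpha\max\{\varepsilon^*,w^*\})}$ for a dimension-dependent constant $C$, so $\varepsilon^*\norm*{\alpha^*(t)}_1\leq C\sqrt{\varepsilon^* V_{ROA}/\lambda_\alpha}$ and analogously for $w^*\norm*{\alpha^*(t)}_1$; both products vanish as $\varepsilon^*,w^*\to0$. To handle $\norm*{\sigma^*(t)}_\infty$, I would use the feasibility constraint \eqref{slack_const} itself, which bounds it by $K_\Psi w^*+(\varepsilon^*+K_ww^*)\norm*{\mathcal{G}^\dagger}_\infty(1+\norm*{\alpha^*(t)}_1)$; combined with the preceding estimates, this quantity also vanishes. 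Hence every contribution to the bound on $\norm*{\breve{\Xi}_{t+L}}_\infty$ can be made arbitrarily small, and $\bar{\varepsilon},\bar{w}$ can be chosen so that $\breve{\Xi}_{t+L}\in\mathbb{B}_\delta$.

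The main obstacle is exactly this coupled estimate: $\norm*{\alpha^*(t)}_1$ is only tamed by $V_{ROA}$ after absorbing a factor of $1/\sqrt{\max\{\varepsilon^*,w^*\}}$, and the only tool controlling $\norm*{\sigma^*(t)}_\infty$ uniformly is the slack constraint \eqref{slack_const}, which itself involves $\norm*{\alpha^*(t)}_1$. Making all the products appearing in Lemma \ref{lemma_output_difference} and in \eqref{slack_const} small simultaneously requires a careful interleaving of the cost bound and the slack bound, mirroring the argument of \cite[Proposition 1]{Berberich203} for the LTI case, but additionally accounting for the nonlinear basis term (via the Lipschitz constant $K_\Psi$) and for the two independent uncertainty sources $\varepsilon^*$ and $w^*$ that must be handled together throughout.
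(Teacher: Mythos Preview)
Your proposal is correct and follows essentially the same approach as the paper: shift the previously optimal input, append a deadbeat controller acting from the true state $\breve{\Xi}_{t+L}$, take the corresponding true outputs as the candidate output, and define $\alpha'(t+d_{\textup{max}})$ and $\sigma'(t+d_{\textup{max}})$ via the pseudoinverse construction analogous to \eqref{candidate_alpha_t}--\eqref{candidate_sigma_t}. The quantitative step you single out---bounding $\norm{\breve{\Xi}_{t+L}}$ by combining Lemma~\ref{lemma_output_difference} with $\norm{\alpha^*(t)}_1\leq\sqrt{N-L-d_{\textup{max}}+1}\sqrt{V_{ROA}/(\lambda_\alpha\max\{\varepsilon^*,w^*\})}$ and the slack constraint \eqref{slack_const}---is exactly how the paper forces $\breve{\Xi}_{t+L}\in\mathbb{B}_\delta$ for small $\varepsilon^*,w^*$.
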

\begin{proof}
	Since $V_t\leq V_{ROA}$, then $J^*_t\leq V_{ROA}$ and, hence, the optimization problem \eqref{DDNMPC2} is feasible at time $t$. Let $\mathbf{\bar{u}}^*(t),\mathbf{\bar{y}}^*(t),\alpha^*(t),\sigma^*(t)$ be the optimal solutions of \eqref{DDNMPC2} at time $t$. We propose the following candidate solution for the input sequence at time $t+d_{\textup{max}}$ for a $d_{\textup{max}}-$step predictive control scheme
	\begin{equation}
		\begin{aligned}
			\mathbf{\bar{u}}'_{[-d_{\textup{max}},-1]}(t+d_{\textup{max}}) &= \mathbf{\bar{u}}^*_{[0,d_{\textup{max}}-1]}(t) = \mathbf{{u}}_{[t,t+d_{\textup{max}}-1]},\\
			\mathbf{\bar{u}}'_{[0,L-d_{\textup{max}}-1]}(t+d_{\textup{max}}) &= \mathbf{\bar{u}}^*_{[d_{\textup{max}},L-1]}(t),
		\end{aligned}\label{pre_u_candidate_tdmax}
	\end{equation}
	where the instances $[-d_{\textup{max}},-1]$ are fixed by the initial conditions \eqref{pc2_ini} at $t+d_{\textup{max}}$, while the instances from $[0,L-d_{\textup{max}}-1]$ are specified as the shifted, previously optimal, input sequence.
	As for the candidate output sequence, we use the output which would have been obtained by applying $\mathbf{\bar{u}}_{[0,L-1]}^*(t)$ at time $t$ to the system, i.e., $\breve{y}_{i,[t,t+L+d_i-1]}$, as follows
	\begin{equation}
		\begin{aligned}
			\bar{y}'_{i,[-d_{\textup{max}},-1]}(t+d_{\textup{max}}) &\stackrel{\eqref{pc2_ini}}{=} \tilde{y}_{i,[t,t+d_{\textup{max}}-1]} = \underbrace{y_{i,[t,t+d_{\textup{max}}-1]}}_{=\breve{y}_{i,[t,t+d_{\textup{max}}-1]}} + w_{i,[t,t+d_{\textup{max}}-1]},\\
			\bar{y}'_{i,[0,L+d_i-d_{\textup{max}}-1]}(t+d_{\textup{max}}) &= \breve{y}_{i,[t+d_{\textup{max}},t+L+d_{i}-1]}.
		\end{aligned}\label{pre_y_candidate_tdmax}
	\end{equation}
	Recall that $\bar{y}_{i,[L,L+d_i-1]}^*(t)=0$ by the terminal constraints of the previous iteration (i.e., at time $t$). Therefore, the following inequality holds by Lemma \ref{lemma_output_difference}
	\begin{equation}
		\begin{aligned}
			\left|\breve{y}_{i,t+k}\right|&\leq\mathcal{P}^{k + d_{\textup{max}} - d_i}(K_{\Xi})\Big(\varepsilon^*(1+\norm*{\alpha^*(t)}_1) + (1 + K_w)w^*\norm*{\alpha^*(t)}_1 + (1+\norm*{\mathcal{G}}_\infty)\norm*{\sigma^*(t)}_\infty\Big), \quad \textup{for }k\in\mathbb{Z}_{[L,L+d_i-1]}.
		\end{aligned}
		\label{bound_on_y_to_show_exitence_of_u}
	\end{equation}
	As $\max\{\varepsilon^*, w^*\}\to0$, the RHS of \eqref{bound_on_y_to_show_exitence_of_u} goes to zero as well since $\norm*{\sigma^*(t)}_\infty$ is bounded by \eqref{slack_const} while $\norm*{\alpha^*(t)}_1$ remains bounded by $\lambda_\alpha\max\{\varepsilon^*, w^*\}\norm*{\alpha^*(t)}_2^2 \leq J^*(t) \leq V_{ROA}$, i.e.,
	\begin{equation*}
			\norm*{\alpha^*(t)}_1 \leq \sqrt{N-L-d_{\textup{max}}+1}\norm*{\alpha^*(t)}_2 \leq \sqrt{N-L-d_{\textup{max}}+1}\sqrt{\frac{V_{ROA}}{\lambda_\alpha\max\{\varepsilon^*, w^*\}}}.
	\end{equation*}
	This implies that the corresponding state $\breve{\Xi}_{t+L}=\begin{bmatrix}
		\breve{y}_{1,[t+L,t+L+d_1-1]}^\top & \dots & \breve{y}_{m,[t+L,t+L+d_m-1]}^\top
	\end{bmatrix}^\top$ approaches zero as the bounds $\max\{\varepsilon^*, w^*\}\to0$. Denote by $\bar{\Xi}'_k(t+d_{\textup{max}})$ the $k-$th instant of the state candidate solution predicted at time $t+d_{\textup{max}}$, and notice that the choice of candidate outputs $\bar{y}'_{i,[L-d_{\textup{max}},L+d_i-d_{\textup{max}}-1]}(t+d_{\textup{max}}) = \breve{y}_{i,[t+L,t+L+d_i-1]}$ implies that $\bar{\Xi}'_{L-d_{\textup{max}}}(t+d_{\textup{max}}) = \breve{\Xi}_{t+L}$, and hence $\bar{\Xi}'_{L-d_{\textup{max}}}(t+d_{\textup{max}})$ also approaches zero as $\max\{\varepsilon^*, w^*\}\to0$. Since System \eqref{Lsys} is controllable, there exists a synthetic input $\mathbf{v}_{[t+L,t+L+d_{\textup{max}}-1]}$, and hence $\mathbf{\bar{u}}'_{[L-d_{\textup{max}},L-1]}(t+d_{\textup{max}})$, e.g., a deadbeat controller, that brings the state $\bar{\Xi}'_{[L-d_{\textup{max}},L-1]}(t+d_{\textup{max}})$ and the corresponding outputs $\bar{y}'_{i,[L-d_{\textup{max}},L+d_i-1]}(t+d_{\textup{max}})$ to zero in $d_{\textup{max}}$ steps while satisfying the following inequality (cf. \eqref{ctrb_argument_t})
	\begin{equation}
		\begin{aligned}
			\norm*{\mathbf{\bar{u}}'_{[L-d_{\textup{max}},L-1]}(t+d_{\textup{max}})}_2^2 &\leq K_\gamma \norm*{\begin{bmatrix}
					\mathbf{v}_{[t+L,t+L+d_{\textup{max}}-1]}\\
					\bar{\Xi}'_{[L-d_{\textup{max}},L-1]}(t+d_{\textup{max}})
			\end{bmatrix}}_2^2 \leq K_\gamma \Gamma_{v}\norm*{\breve{\Xi}_{t+L}}_2^2.
		\end{aligned}\label{ctrb_argument_tdmax}
	\end{equation}
	\noindent Notice that the effect of $\mathbf{\bar{u}}'_{[L-d_{\textup{max}},L-d_i-1]}(t+d_{\textup{max}})$ (and the corresponding $\mathbf{v}_{[t+L,t+L+d_{\textup{max}}-d_i-1]}$), appears at the $i-$th output $d_i$ instances afterwards. However, it only affects those outputs for which $d_i\neq d_{\textup{max}}$. In particular
	\begin{equation}
		\begin{bmatrix}
			\bar{y}_{i,L+d_i-d_{\textup{max}}}'(t+d_{\textup{max}})\\
			\vdots\\
			\bar{y}_{i,L-1}'(t+d_{\textup{max}})					
		\end{bmatrix}
		 = {v}_{i,[t+L,t+L+d_{\textup{max}}-d_i-1]}.\label{pre_y_di_not_dmax}
	\end{equation}
	Finally, the terminal constraints fix the last $d_i$ instances of each output according to \eqref{pc2_term}. To sum up, the candidate solutions for the input and output sequences at time $t+d_{\textup{max}}$ take the following form
	\begin{equation}
		\begin{aligned}
			\mathbf{\bar{u}}'_{[-d_{\textup{max}},-1]}(t+d_{\textup{max}}) &\stackrel{\eqref{pc2_ini}}{=} \mathbf{\bar{u}}^*_{[0,d_{\textup{max}}-1]}(t) = \mathbf{{u}}_{[t,t+d_{\textup{max}}-1]},\\
			\mathbf{\bar{u}}'_{[0,L-d_{\textup{max}}-1]}(t+d_{\textup{max}}) &\stackrel{\eqref{pre_u_candidate_tdmax}}{=} \mathbf{\bar{u}}^*_{[d_{\textup{max}},L-1]}(t),\\
			\mathbf{\bar{u}}'_{[L-d_{\textup{max}},L-1]}(t+d_{\textup{max}}) &: \textup{deadbeat controller that satisfies }\eqref{ctrb_argument_tdmax},\\
			\bar{y}'_{i,[-d_{\textup{max}},-1]}(t+d_{\textup{max}}) &\stackrel{\eqref{pc2_ini}}{=} \tilde{y}_{i,[t,t+d_{\textup{max}}-1]} = \underbrace{y_{i,[t,t+d_{\textup{max}}-1]}}_{=\breve{y}_{i,[t,t+d_{\textup{max}}-1]}} + w_{i,[t,t+d_{\textup{max}}-1]},\\
			\bar{y}'_{i,[0,L+d_i-d_{\textup{max}}-1]}(t+d_{\textup{max}}) &\stackrel{\eqref{pre_y_candidate_tdmax}}{=} \breve{y}_{i,[t+d_{\textup{max}},t+L+d_{i}-1]},\\
			\bar{y}_{i,[L+d_i-d_{\textup{max}},L-1]}'(t+d_{\textup{max}}) &\stackrel{\eqref{pre_y_di_not_dmax}}{=} {v}_{i,[t+L,t+L+d_{\textup{max}}-d_i-1]},\quad \textup{ for }d_i\neq d_{\textup{max}},\\
			\bar{y}_{i,[L,L+d_i-1]}'(t+d_{\textup{max}}) &\stackrel{\eqref{pc2_term}}{=} \mathbf{0}.
		\end{aligned}\label{cand_sols_tdmax}
	\end{equation}
	Next, we choose $\alpha'(t+d_{\textup{max}})$ as
	\begin{gather}
		\alpha'(t+d_{\textup{max}}) = \begin{bmatrix}
			H_{L+d_{\textup{max}}}(\hat{\Psi}(\mathbf{u}^{\textup{d}},\tilde{\Xi}^{\textup{d}}) + \hat{E}(\mathbf{u}^{\textup{d}},\tilde{\Xi}^{\textup{d}}) + \hat{D}(\omega^{\textup{d}}) ) \\ H_1(\Xi_{[0,N-L-d_{\textup{max}}]}^{\textup{d}})
		\end{bmatrix}^{\dagger}\begin{bmatrix}
		\hat{\Psi}(\mathbf{\bar{u}}'(t+d_{\textup{max}}),\breve{\Xi}) + \hat{E}(\mathbf{\bar{u}}'(t+d_{\textup{max}}),\breve{\Xi}) + \hat{D}(\breve{\omega})\\ {\Xi}_{t}
	\end{bmatrix},\label{candidate_alpha_tdmax}
	\end{gather}
	where $\breve{\Xi}\coloneqq\breve{\Xi}_{[t,t+L-1]}$ with each $\breve{\Xi}_{t+j} = \begin{bmatrix}\breve{y}_{1,[t+j,t+j+d_1-1]}^\top & \cdots& \breve{y}_{m,[t+j,t+j+d_m-1]}^\top\end{bmatrix}^\top$ and $\breve{\omega}\coloneqq\begin{bmatrix}{\omega}_{[t,t+d_{\textup{max}}-1]}^\top & \mathbf{0}\end{bmatrix}^\top$.\par
	The choice of $\alpha'(t+d_{\textup{max}})$ exists by persistency of excitation of $\{\hat{\Psi}_k(\mathbf{u}^{\textup{d}},\tilde{\Xi}^{\textup{d}}) + \hat{E}_k(\mathbf{u}^{\textup{d}},\tilde{\Xi}^{\textup{d}}) + \hat{D}_k(\omega^{\textup{d}})\}_{k=0}^{N-1}$ (cf. Assumption \ref{PE_asmp}). Furthermore, this choice of $\alpha'(t+d_{\textup{max}})$ satisfies
	\begin{equation*}
		\begin{bmatrix}
			H_{L+d_{\textup{max}}}(\hat{\Psi}(\mathbf{u}^{\textup{d}},\tilde{\Xi}^{\textup{d}}) + \hat{E}(\mathbf{u}^{\textup{d}},\tilde{\Xi}^{\textup{d}}) + \hat{D}(\omega^{\textup{d}}) ) \\ H_{L+d_{\textup{max}}+1}(\Xi^{\textup{d}})
		\end{bmatrix}\alpha'(t+d_{\textup{max}}) = \begin{bmatrix}
			\hat{\Psi}(\mathbf{\bar{u}}'(t),\breve{\Xi}) + \hat{E}(\mathbf{\bar{u}}'(t),\breve{\Xi}) + \hat{D}(\breve{\omega})\\ \begin{bmatrix}\breve{\Xi}_{[t-d_{\textup{max}},t+L-1]}^\top&\mathbf{0}\end{bmatrix}^\top
		\end{bmatrix}.
	\end{equation*}
	Finally, by plugging \eqref{candidate_alpha_tdmax} into \eqref{pc2_willems}, we get the following expression for $\sigma'(t+d_{\textup{max}})$
	\begin{align}
		&\sigma'(t+d_{\textup{max}}) = \begin{bmatrix}
			\sigma'_{\Psi}(t+d_{\textup{max}}) \\ \sigma'_{\Xi}(t+d_{\textup{max}})
		\end{bmatrix} =\label{candidate_sigma_tdmax}
	\\&\scalebox{0.95}{$\begin{bmatrix}
			\hat{\Psi}(\mathbf{\bar{u}}'(t+d_{\textup{max}}),\breve{\Xi}) - \hat{\Psi}(\mathbf{\bar{u}}'(t+d_{\textup{max}}),\bar{\Xi}'(t+d_{\textup{max}})) + \hat{E}(\mathbf{\bar{u}}'(t+d_{\textup{max}}),\breve{\Xi}) + \hat{D}(\breve{\omega}) - H_{L+d_{\textup{max}}}( \hat{E}(\mathbf{u}^{\textup{d}},\tilde{\Xi}^{\textup{d}}) + \hat{D}(\omega^{\textup{d}}) )\alpha'(t+d_{\textup{max}}) \\
			H_{L+d_{\textup{max}}+1}(\omega^{\textup{d}})\alpha'(t+d_{\textup{max}}) - \begin{bmatrix}
				\omega_{[t, t+d_{\textup{max}}-1]}\\ \mathbf{0}\\ \mathbf{0}
			\end{bmatrix}
		\end{bmatrix}$}\notag
	\end{align}
	which satisfies \eqref{slack_const} (this can be shown by following similar arguments as in \eqref{candidate_sigma_at_t_satisfies_slack_const}).
	In (\ref{cand_sols_tdmax}--\ref{candidate_sigma_tdmax}), candidate solutions were shown to exist and satisfy the constraints (\ref{pc2_willems}--\ref{slack_const}) at the subsequent iteration ($t+d_{\textup{max}}$) of the MPC scheme given that the previous iteration (at time $t$) was feasible, which completes the proof.
\end{proof}
Theorem \ref{recrusive_feasibility_theorem} showed that for any $V_{ROA}>0$, there exists $\bar{\varepsilon}>0$ and $\bar{w}>0$ such that for any $\varepsilon^*\leq\bar{\varepsilon},\,w^*\leq\bar{w}$ and any state $\Xi_t$ starting from the sub-level set $\mathbb{V}\coloneqq\{\Xi_t\in\mathbb{R}^n~|~V_t\leq V_{ROA}\}$, if the $d_{\textup{max}}-$step predictive control scheme \eqref{DDNMPC2} is feasible at time $t$, then it is feasible at time $t+d_{\textup{max}}$. Similar to \cite{Berberich203}, as the size of the sub-level set $\mathbb{V}$ increases, the term $\max\{\varepsilon^*, w^*\}$ decreases, and vice versa. This is explained by the fact that a larger sub-level set $\mathbb{V}$ includes states $\Xi_t$ that are farther away from the origin. Starting from such states makes the task of stabilizing the origin more difficult if $\max\{\varepsilon^*, w^*\}$ is large, and hence a small value of $\max\{\varepsilon^*, w^*\}$ is required for the results in Theorem \ref{recrusive_feasibility_theorem} to hold.\par
This does not yet imply recursive feasibility of the predictive control scheme for all time (cf. \cite{Berberich203}). To show that, we must show that the sub-level set $\mathbb{V}$ is invariant. This is presented in the next section along with the main result of practical stability.
	\section{Practical stability}\label{prac_stb_sec}
In the final section of this report, we combine all of the above arguments and present the main result of practical stability and exponential convergence of the Lyapunov function to a region whose size depends on $\varepsilon^*$ and $w^*$. The results shown below follow similar arguments as in \cite[Theorem 3]{Berberich203}, that are carefully adjusted for the nonlinear system \eqref{NLsys} under consideration.
\begin{theorem}\label{main_result_of_report}
	Let Assumptions \ref{bounded_err_assmp}--\ref{horizon_rob_asmp} hold. Then, for any $V_{ROA}>0$, there exist $\underline{\lambda}_\alpha,\bar{\lambda}_\alpha,\underline{\lambda}_\sigma,\bar{\lambda}_\sigma>0$ such that for all $\lambda_\alpha, \lambda_\sigma>0$ satisfying
	\begin{equation*}
		\underline{\lambda}_\alpha \leq \lambda_\alpha \leq \bar{\lambda}_\alpha, \qquad \underline{\lambda}_\sigma \leq \lambda_\sigma \leq \bar{\lambda}_\sigma,
	\end{equation*}
	there exists constants $\bar{\varepsilon},\bar{w},\bar{c}_{\textup{pe}}>0$ as well as a continuous, strictly increasing function $\beta:[0,\bar{\varepsilon}]\times[0,\bar{w}]\to[0,V_{ROA}]$ with $\beta(0,0)=0$, such that for all $\varepsilon^*,w^*, c_{\textup{pe}}$ satisfying
	\begin{equation*}
		\varepsilon^*\leq \bar{\varepsilon},\qquad w^*\leq\bar{w},\qquad c_{\textup{pe}}\max\{\varepsilon^*,w^*\}\leq \bar{c}_{\textup{pe}}
	\end{equation*}
	the sub-level set $\mathbb{V}$ is invariant and $V_t$ converges exponentially to $V_t\leq \beta(\varepsilon^*, w^*)$ in closed-loop with $d_{\textup{max}}-$step MPC scheme for all initial conditions for which $V_0\leq V_{ROA}$.
\end{theorem}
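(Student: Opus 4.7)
The plan is to combine the candidate-solution construction from the recursive feasibility proof with the IOSS inequality \eqref{IOSS} and the upper/lower bounds \eqref{rob_lower_bnd},\eqref{ub_V_t} from Section~\ref{lyap_sec} into a standard $d_{\textup{max}}$-step contraction argument, following the template of \cite{Berberich203}. Concretely, I will show an inequality of the form
\begin{equation*}
V_{t+d_{\textup{max}}} - V_t \le -c_{\textup{dec}}\norm*{\Xi_t}_2^2 + \psi(\varepsilon^*,w^*,c_{\textup{pe}}),
\end{equation*}
with $\psi(0,0,\cdot) = 0$ continuous and strictly increasing, and then invoke the lower bound $V_t \ge c_3\lambda_{\textup{min}}(P)\norm*{\Xi_t}_2^2$ together with the quadratic upper bound $V_t \le c_4\norm*{\Xi_t}_2^2 + c_5$ to deduce both exponential convergence to a $\beta(\varepsilon^*,w^*)$-sublevel set and invariance of $\mathbb{V}$.

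\textbf{Step 1 (bounding $J^*_{t+d_{\textup{max}}}$).} Plug the candidate $(\bar{\mathbf{u}}',\bar{y}_i',\alpha',\sigma')(t+d_{\textup{max}})$ constructed in (\ref{cand_sols_tdmax}--\ref{candidate_sigma_tdmax}) into the cost \eqref{opt_val_fcn_def} and split the horizon into three blocks: the shifted block $k\in\mathbb{Z}_{[0,L-d_{\textup{max}}-1]}$, the deadbeat block $k\in\mathbb{Z}_{[L-d_{\textup{max}},L-1]}$, and the regularization block. For the shifted block, subtract and add $\sum_{k=d_{\textup{max}}}^{L-1}\ell(\bar{\mathbf{u}}^*_k(t),\bar{\mathbf{y}}^*_k(t))$ and use Lemma~\ref{lemma_output_difference} to bound $|\breve{y}_{i,t+k}-\bar{y}^*_{i,k}(t)|$; all resulting cross terms are dominated by $C\max\{\varepsilon^*,w^*\}(1+\norm*{\alpha^*(t)}_1^2) + C'\norm*{\sigma^*(t)}_\infty^2$, which (using $\norm*{\alpha^*(t)}_1^2 \le (N-L-d_{\textup{max}}+1)\norm*{\alpha^*(t)}_2^2$ and the slack constraint \eqref{slack_const}) can be absorbed into $\lambda_\alpha\max\{\varepsilon^*,w^*\}\norm*{\alpha^*(t)}_2^2 + \lambda_\sigma\norm*{\sigma^*(t)}_2^2 \le J^*_t$ provided $\lambda_\alpha,\lambda_\sigma$ are large enough. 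For the deadbeat block, combine \eqref{ctrb_argument_tdmax} with the terminal constraint $\bar{y}^*_{i,[L,L+d_i-1]}(t)=\mathbf{0}$ and Lemma~\ref{lemma_output_difference} (applied at $k\in\mathbb{Z}_{[L,L+d_i-1]}$) to show $\norm*{\breve{\Xi}_{t+L}}_2^2 \le C''\max\{\varepsilon^*,w^*\}^2(1+\norm*{\alpha^*(t)}_1^2)^2 + C'''\norm*{\sigma^*(t)}_\infty^2$; the latter is again absorbable into the regularization of $J^*_t$ when $\max\{\varepsilon^*,w^*\}$ is sufficiently small. For the regularization block, re-run the derivation of \eqref{ub_alpha_t}--\eqref{ub_sigma_t} with the arguments $(\mathbf{\bar{u}}'(t+d_{\textup{max}}),\breve{\Xi})$ replacing $(\mathbf{\bar{u}}'(t),\Xi)$, so that the contribution is of the form $c_{\textup{pe}}\bigl(\mu_1 + \mu_2\norm*{\breve{\Xi}_{t+L-1}}_2^2\bigr)$ plus constants scaling with $\max\{\varepsilon^*,w^*\}$.

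\textbf{Step 2 (IOSS and strict decrease).} Collecting the block bounds yields
\begin{equation*}
J^*_{t+d_{\textup{max}}} \le J^*_t - \sum_{k=0}^{d_{\textup{max}}-1}\ell(\bar{\mathbf{u}}^*_k(t),\bar{\mathbf{y}}^*_k(t)) + \psi_1(\varepsilon^*,w^*,c_{\textup{pe}})(1 + J^*_t).
\end{equation*}
Since the first $d_{\textup{max}}$ predicted inputs are actually applied, $\mathbf{u}_{t+k}=\bar{\mathbf{u}}^*_k(t)$ for $k\in\mathbb{Z}_{[0,d_{\textup{max}}-1]}$, and Lemma~\ref{lemma_output_difference} gives $|\mathbf{y}_{t+k}-\bar{\mathbf{y}}^*_k(t)| = |\breve{y}_{i,t+k}-\bar{y}^*_{i,k}(t)|$ uniformly small, so by the reverse triangle inequality the predicted stage cost lower bounds $\tfrac{1}{2}\sum_k\ell(\mathbf{u}_{t+k},\mathbf{y}_{t+k})$ up to an $\varepsilon^*,w^*$-dependent term. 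Multiplying \eqref{IOSS} by $c_3$ and adding to the previous display, choose $c_3>0$ so that $c_3 c_1\norm*{\mathbf{v}_{[t,t+d_{\textup{max}}-1]}}_2^2$ and $c_3 c_2\norm*{\mathbf{y}_{[t,t+d_{\textup{max}}-1]}}_2^2$ are dominated by the stage cost's $R$- and $Q$-weighted parts (using Lipschitz continuity of $\Phi$ to bound $\mathbf{v}$ by $\mathbf{u},\Xi$). This gives the target inequality
\begin{equation*}
V_{t+d_{\textup{max}}} - V_t \le -c_{\textup{dec}}\norm*{\Xi_t}_2^2 + \psi(\varepsilon^*,w^*,c_{\textup{pe}})(1+V_t).
\end{equation*}

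\textbf{Step 3 (exponential convergence and invariance).} Substituting $\norm*{\Xi_t}_2^2 \ge (V_t-c_5)/c_4$ from \eqref{ub_V_t} (valid whenever $V_t \ge c_5$) yields a linear recursion $V_{t+d_{\textup{max}}} \le (1-\rho)V_t + b(\varepsilon^*,w^*)$ with $\rho \in (0,1)$ and $b(0,0)=0$ continuous and strictly increasing; iterating gives $V_{t+k d_{\textup{max}}} \le (1-\rho)^k V_t + b/\rho$, which is the claimed exponential convergence with $\beta(\varepsilon^*,w^*)\coloneqq b(\varepsilon^*,w^*)/\rho$. Invariance of $\mathbb{V}$ follows: if $V_t\le V_{ROA}$ then $V_{t+d_{\textup{max}}} \le (1-\rho)V_{ROA} + b \le V_{ROA}$ provided $b(\bar\varepsilon,\bar w)\le \rho V_{ROA}$, which by continuity of $b$ can be secured by shrinking $\bar\varepsilon,\bar w,\bar c_{\textup{pe}}$; this simultaneously validates the small-noise assumptions needed for Theorem~\ref{recrusive_feasibility_theorem} and for Assumption~\ref{bounded_err_assmp} to apply along the closed-loop trajectory (which stays in $\mathbb{V}$, hence in a compact subset of the input-state space). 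Finally, the bounds $\underline\lambda_\alpha\le\lambda_\alpha\le\bar\lambda_\alpha$ and $\underline\lambda_\sigma\le\lambda_\sigma\le\bar\lambda_\sigma$ arise as follows: the lower bounds ensure that the $\norm*{\alpha^*(t)}_1^2$ and $\norm*{\sigma^*(t)}_\infty^2$ terms generated in Step~1 are dominated by the corresponding regularization in $J^*_t$, while the upper bounds keep $c_4$ and $c_5$ in \eqref{ub_V_t} from swamping the decrease coefficient $c_{\textup{dec}}/c_4 = \rho/d_{\textup{max}}$.

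\textbf{Main obstacle.} The hardest part is the bookkeeping in Step~1: every error term produced by basis-function approximation, noise, and the slack variable appears multiplied by $1+\norm*{\alpha^*(t)}_1$ or $1+\norm*{\alpha^*(t)}_1^2$, and squaring (from quadratic stage cost and quadratic regularization) produces quartic-in-$\norm*{\alpha^*(t)}_1$ terms that must be tamed. The key observation is that $\lambda_\alpha\max\{\varepsilon^*,w^*\}\norm*{\alpha^*(t)}_2^2\le J^*_t \le V_{ROA}$ gives $\norm*{\alpha^*(t)}_1^2 \le (N-L-d_{\textup{max}}+1)V_{ROA}/(\lambda_\alpha\max\{\varepsilon^*,w^*\})$, so the offending quartic-in-$\norm*{\alpha^*(t)}_1$ terms scaled by $\max\{\varepsilon^*,w^*\}^2$ reduce to $O(\max\{\varepsilon^*,w^*\})$ times a constant; a second delicate point is the cross term $c_{\textup{pe}}\mu_2\norm*{\breve{\Xi}_{t+L-1}}_2^2$ from the regularization block, which is why the hypothesis has to constrain $c_{\textup{pe}}\max\{\varepsilon^*,w^*\}$ rather than $c_{\textup{pe}}$ alone.
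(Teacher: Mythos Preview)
Your proposal is correct and follows essentially the same route as the paper: the same candidate solution and three-block decomposition with Lemma~\ref{lemma_output_difference}, the same combination with \eqref{IOSS} and choice of $c_3$ so that the IOSS input/output terms are absorbed by the predicted stage cost, and the same conversion to a one-step linear recursion via the quadratic upper bound \eqref{ub_V_t} to obtain the contraction and invariance. One correction worth flagging: when you re-run \eqref{ub_alpha_t} at $t+d_{\textup{max}}$ the bound is $\norm*{\alpha'(t+d_{\textup{max}})}_2^2\le c_{\textup{pe}}(\mu_1+\mu_3\norm*{\Xi_t}_2^2)$, since $\alpha'(t+d_{\textup{max}})$ in \eqref{candidate_alpha_tdmax} depends on the whole trajectory $\breve{\Xi}_{[t,t+L-1]}$ and on $\Xi_t$ (not only on $\breve{\Xi}_{t+L-1}$), and it is precisely the resulting term $\lambda_\alpha\max\{\varepsilon^*,w^*\}c_{\textup{pe}}\mu_3\norm*{\Xi_t}_2^2$ competing against the IOSS decrease $-\tfrac{c_3}{2}\norm*{\Xi_t}_2^2$ that forces the hypothesis $c_{\textup{pe}}\max\{\varepsilon^*,w^*\}\le\bar{c}_{\textup{pe}}$ and the upper bounds $\bar\lambda_\alpha,\bar\lambda_\sigma$.
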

\begin{proof}
	Let $\varepsilon^*\leq\bar{\varepsilon},\,w^*\leq\bar{w}$ be small enough such that Theorem \ref{recrusive_feasibility_theorem} holds and let Problem \eqref{DDNMPC2} be feasible at time $t+d_{\textup{max}}$. Then by optimality,
	\begin{equation}
		\begin{aligned}
			J^*_{t+d_{\textup{max}}} &\leq J_{t+d_{\textup{max}}}\\
			&= \sum\limits_{k=0}^{L-1}\ell(\mathbf{\bar{u}}'_k(t+d_{\textup{max}}),\mathbf{\bar{y}}'_k(t+d_{\textup{max}})) + \lambda_\alpha\max\{\varepsilon^*, w^*\}\norm*{\alpha'(t+d_{\textup{max}})}_2^2+\lambda_\sigma\norm*{\sigma'(t+d_{\textup{max}})}_2^2\\
			&\quad+ J^*_t - \sum\limits_{k=0}^{L-1}\ell(\mathbf{\bar{u}}^*_k(t),\mathbf{\bar{y}}^*_k(t))-\lambda_\alpha\max\{\varepsilon^*, w^*\}\norm*{\alpha^*(t)}_2^2-\lambda_\sigma\norm*{\sigma^*(t)}_2^2,
		\end{aligned}\label{optimality_at_tdmax}
	\end{equation}
	where the last four terms were obtained by adding and subtracting the optimal value function at time $t$. We start by considering the summation terms in \eqref{optimality_at_tdmax}
	\begin{equation}
		\begin{aligned}
			\sum\limits_{k=0}^{L-1}\ell(\mathbf{\bar{u}}'_k(t+d_{\textup{max}}),\mathbf{\bar{y}}'_k(t+d_{\textup{max}})) - \sum\limits_{k=0}^{L-1}\ell(\mathbf{\bar{u}}^*_k(t),\mathbf{\bar{y}}^*_k(t)) &= \sum\limits_{k=0}^{L-d_{\textup{max}}-1}\ell\left(\mathbf{\bar{u}}'_k(t+d_{\textup{max}}),\mathbf{\bar{y}}'_k(t+d_{\textup{max}})\right) \\
			&\quad+ \sum\limits_{k=L-d_{\textup{max}}}^{L-1}\ell\left(\mathbf{\bar{u}}'_{k}(t+d_{\textup{max}}),\mathbf{\bar{y}}'_k(t+d_{\textup{max}})\right)\\
			&\quad-\sum\limits_{k=0}^{d_{\textup{max}}-1}\ell\left(\mathbf{\bar{u}}^*_k(t),\mathbf{\bar{y}}_k^*(t)\right) - \sum\limits_{k=d_{\textup{max}}}^{L-1}\ell\left(\mathbf{\bar{u}}^*_k(t),\mathbf{\bar{y}}_k^*(t)\right).
		\end{aligned}
	\end{equation}
Notice that by rearranging the summation limits, one obtains
	\begin{equation}
		\sum\limits_{k=d_{\textup{max}}}^{L-1}\ell\left(\mathbf{\bar{u}}^*_k(t),\mathbf{\bar{y}}_k^*(t)\right) = \sum\limits_{k=0}^{L-d_{\textup{max}}-1}\ell\left(\mathbf{\bar{u}}^*_{k+d_{\textup{max}}}(t),\mathbf{\bar{y}}^*_{k+d_{\textup{max}}}(t)\right).
	\end{equation}
Therefore
	\begin{equation}
		\begin{aligned}
			\sum\limits_{k=0}^{L-1}\ell(\mathbf{\bar{u}}'_k(t+d_{\textup{max}})&,\mathbf{\bar{y}}'_k(t+d_{\textup{max}})) - \sum\limits_{k=0}^{L-1}\ell(\mathbf{\bar{u}}^*_k(t),\mathbf{\bar{y}}^*_k(t))\\
			&=\sum\limits_{k=0}^{L-d_{\textup{max}}-1}\left(\ell\left(\mathbf{\bar{u}}'_k(t+d_{\textup{max}}),\mathbf{\bar{y}}'_k(t+d_{\textup{max}})\right) - \ell\left(\mathbf{\bar{u}}^*_{k+d_{\textup{max}}}(t),\mathbf{\bar{y}}^*_{k+d_{\textup{max}}}(t)\right)\right)\\
			&\quad+\sum\limits_{k=L-d_{\textup{max}}}^{L-1}\ell\left(\mathbf{\bar{u}}'_{k}(t+d_{\textup{max}}),\mathbf{\bar{y}}'_k(t+d_{\textup{max}})\right) - \sum\limits_{k=0}^{d_{\textup{max}}-1}\ell\left(\mathbf{\bar{u}}^*_k(t),\mathbf{\bar{y}}_k^*(t)\right).
		\end{aligned}
	\label{sum_terms_at_tdmax}
	\end{equation}
Notice that the inputs in the first summation term on the RHS of \eqref{sum_terms_at_tdmax} are the same, i.e., $\mathbf{\bar{u}}'_{[0,L-d_{\textup{max}}-1]}(t+d_{\textup{max}}) = \mathbf{\bar{u}}^*_{[d_{\textup{max}},L-1]}(t)$ by \eqref{cand_sols_tdmax}. Therefore, the stage costs in that term are simply
\begin{equation}
	\begin{aligned}
		\ell(\mathbf{\bar{u}}'_k(t+d_{\textup{max}})&,\mathbf{\bar{y}}'_k(t+d_{\textup{max}})) - \ell\left(\mathbf{\bar{u}}^*_{k+d_{\textup{max}}}(t),\mathbf{\bar{y}}^*_{k+d_{\textup{max}}}(t)\right)\\
		& = \norm*{\mathbf{\bar{y}}'_k(t+d_{\textup{max}})}_Q^2 - \norm*{\mathbf{\bar{y}}^*_{k+d_{\textup{max}}}(t)}_Q^2\\
		&=\norm*{\mathbf{\bar{y}}'_k(t+d_{\textup{max}}) - \mathbf{\bar{y}}^*_{k+d_{\textup{max}}}(t) + \mathbf{\bar{y}}^*_{k+d_{\textup{max}}}(t) }_Q^2 - \norm*{\mathbf{\bar{y}}^*_{k+d_{\textup{max}}}(t)}_Q^2\\
		&= \norm*{\mathbf{\bar{y}}'_k(t+d_{\textup{max}}) - \mathbf{\bar{y}}^*_{k+d_{\textup{max}}}(t)}_Q^2 + 2(\mathbf{\bar{y}}'_k(t+d_{\textup{max}}) - \mathbf{\bar{y}}^*_{k+d_{\textup{max}}}(t))^\top Q \mathbf{\bar{y}}^*_{k+d_{\textup{max}}}(t)\\
		&\leq \norm*{\mathbf{\bar{y}}'_k(t+d_{\textup{max}}) - \mathbf{\bar{y}}^*_{k+d_{\textup{max}}}(t)}_Q^2 + 2\norm*{\mathbf{\bar{y}}'_k(t+d_{\textup{max}}) - \mathbf{\bar{y}}^*_{k+d_{\textup{max}}}(t)}_Q\norm*{\mathbf{\bar{y}}^*_{k+d_{\textup{max}}}(t)}_Q.
	\end{aligned}
\end{equation}
Since $J^*_t\leq V(t)\leq V_{ROA}$, it holds that $\norm*{\mathbf{\bar{y}}^*_{k+d_{\textup{max}}}(t)}_Q^2\leq J^*_t \leq V_{ROA}$, and using the inequality $2a \leq 1+a^2$ for $a\in\mathbb{R}$ one can write $ 2\norm*{\mathbf{\bar{y}}^*_{k+d_{\textup{max}}}(t)}_Q \leq 1+\norm*{\mathbf{\bar{y}}^*_{k+d_{\textup{max}}}(t)}_Q^2 \leq 1+V_{ROA}$. Therefore, an upper bound on the first summation term on the RHS of \eqref{sum_terms_at_tdmax} is expressed as
\begin{equation}
	\begin{aligned}
		&\sum\limits_{k=0}^{L-d_{\textup{max}}-1}\ell\left(\mathbf{\bar{u}}'_k(t+d_{\textup{max}}),\mathbf{\bar{y}}'_k(t+d_{\textup{max}})\right) - \ell\left(\mathbf{\bar{u}}^*_{k+d_{\textup{max}}}(t),\mathbf{\bar{y}}^*_{k+d_{\textup{max}}}(t)\right)\\
		&\leq \sum\limits_{k=0}^{L-d_{\textup{max}}-1}\left(\norm*{\mathbf{\bar{y}}'_k(t+d_{\textup{max}}) - \mathbf{\bar{y}}^*_{k+d_{\textup{max}}}(t)}_Q^2 + \norm*{\mathbf{\bar{y}}'_k(t+d_{\textup{max}}) - \mathbf{\bar{y}}^*_{k+d_{\textup{max}}}(t)}_Q(1+V_{ROA})\right).
	\end{aligned}\label{pre_1st_sum_term_tdmax_ub}
\end{equation}
Note that, for $k\in\mathbb{Z}_{[0,L-d_{\textup{max}}-1]}$, the following holds
\begin{equation}
	\norm*{\mathbf{\bar{y}}'_k(t+d_{\textup{max}}) - \mathbf{\bar{y}}^*_{k+d_{\textup{max}}}(t)}_Q^2 = \norm*{\begin{bmatrix}
			{\bar{y}}'_{1,k}(t+d_{\textup{max}}) - {\bar{y}}^*_{1,k+d_{\textup{max}}}(t) \\ \vdots \\ {\bar{y}}'_{m,k}(t+d_{\textup{max}}) - {\bar{y}}^*_{m,k+d_{\textup{max}}}(t)
	\end{bmatrix}}_Q^2. \label{y_difference_in_1st_sum_term_tdmax}
\end{equation}
Furthermore, note from \eqref{cand_sols_tdmax} that ${\bar{y}}'_{i,[0,L-d_{\textup{max}}-1]}(t+d_{\textup{max}})=\breve{y}_{i,[t+d_{\textup{max}},t+L-1]}$. Therefore, one can re-write \eqref{y_difference_in_1st_sum_term_tdmax} as
\begin{equation}
	\begin{aligned}
		&\norm*{\mathbf{\bar{y}}'_k(t+d_{\textup{max}}) - \mathbf{\bar{y}}^*_{k+d_{\textup{max}}}(t)}_Q^2\\
		&= \norm*{\begin{bmatrix}
				\breve{y}_{1,t+k+d_{\textup{max}}} - {\bar{y}}^*_{1,k+d_{\textup{max}}}(t) \\ \vdots \\ \breve{y}_{m,t+k+d_{\textup{max}}} - {\bar{y}}^*_{m,k+d_{\textup{max}}}(t)
		\end{bmatrix}}_Q^2 \leq \lambda_{\max}(Q)\norm*{\begin{bmatrix}
				\breve{y}_{1,t+k+d_{\textup{max}}} - {\bar{y}}^*_{1,k+d_{\textup{max}}}(t) \\ \vdots \\ \breve{y}_{m,t+k+d_{\textup{max}}} - {\bar{y}}^*_{m,k+d_{\textup{max}}}(t)
		\end{bmatrix}}_2^2\\
		&\leq m\lambda_{\max}(Q)\norm*{\begin{bmatrix}
				\breve{y}_{1,t+k+d_{\textup{max}}} - {\bar{y}}^*_{1,k+d_{\textup{max}}}(t) \\ \vdots \\ \breve{y}_{m,t+k+d_{\textup{max}}} - {\bar{y}}^*_{m,k+d_{\textup{max}}}(t)
		\end{bmatrix}}_\infty^2 \hspace{-3mm} = m\lambda_{\max}(Q)\max\limits_i\left|\breve{y}_{i,t+k+d_{\textup{max}}} - {\bar{y}}^*_{i,k+d_{\textup{max}}}(t)\right|^2\\
		& \stackrel{\eqref{output_difference}}{\leq} m\lambda_{\max}(Q)\max\limits_i\left[\mathcal{P}^{k + 2d_{\textup{max}} - d_i}(K_{\Xi})\Big(\varepsilon^*(1+\norm*{\alpha^*(t)}_1) + (1 + K_w)w^*\norm*{\alpha^*(t)}_1 + (1+\norm*{\mathcal{G}}_\infty)\norm*{\sigma^*(t)}_\infty\Big)\right]^2.
	\end{aligned}\label{y_difference_ub_in_1st_sum_term_tdmax}
\end{equation}
From \eqref{y_difference_ub_in_1st_sum_term_tdmax}, it also holds that
\begin{align}
	&\norm*{\mathbf{\bar{y}}'_k(t+d_{\textup{max}}) - \mathbf{\bar{y}}^*_{k+d_{\textup{max}}}(t)}_Q\notag\\
	&\leq \sqrt{m\lambda_{\max}(Q)}\max\limits_i\left[\mathcal{P}^{k + 2d_{\textup{max}} - d_i}(K_{\Xi})\Big(\varepsilon^*(1+\norm*{\alpha^*(t)}_1) + (1 + K_w)w^*\norm*{\alpha^*(t)}_1 + (1+\norm*{\mathcal{G}}_\infty)\norm*{\sigma^*(t)}_\infty\Big)\right],\label{y_difference_sqrt_ub_in_1st_sum_term_tdmax}
\end{align}
for $k\in\mathbb{Z}_{[0,L-d_{\textup{max}}-1]}$. Substituting \eqref{y_difference_ub_in_1st_sum_term_tdmax} and \eqref{y_difference_sqrt_ub_in_1st_sum_term_tdmax} in \eqref{pre_1st_sum_term_tdmax_ub} we obtain
\begin{equation}
	\begin{aligned}
		&\sum\limits_{k=0}^{L-d_{\textup{max}}-1}\ell\left(\mathbf{\bar{u}}'_k(t+d_{\textup{max}}),\mathbf{\bar{y}}'_k(t+d_{\textup{max}})\right) - \ell\left(\mathbf{\bar{u}}^*_{k+d_{\textup{max}}}(t),\mathbf{\bar{y}}^*_{k+d_{\textup{max}}}(t)\right)\\
		&\leq \sum\limits_{k=0}^{L-d_{\textup{max}}-1}\left\lbrace m\lambda_{\max}(Q)\max\limits_i\left[\mathcal{P}^{k + 2d_{\textup{max}} - d_i}(K_{\Xi})\Big(\varepsilon^*(1+\norm*{\alpha^*(t)}_1) + (1 + K_w)w^*\norm*{\alpha^*(t)}_1 + (1+\norm*{\mathcal{G}}_\infty)\norm*{\sigma^*(t)}_\infty\Big)\right]^2 \right.\\
		& \quad\left. + \sqrt{m\lambda_{\max}(Q)}\max\limits_i\left[\mathcal{P}^{k + 2d_{\textup{max}} - d_i}(K_{\Xi})\Big(\varepsilon^*(1+\norm*{\alpha^*(t)}_1) + (1 + K_w)w^*\norm*{\alpha^*(t)}_1 + (1+\norm*{\mathcal{G}}_\infty)\norm*{\sigma^*(t)}_\infty\Big)\right](1+V_{ROA}) \right\rbrace.
	\end{aligned}\label{1st_sum_term_tdmax_ub}
\end{equation}
	The second summation term in \eqref{sum_terms_at_tdmax} can be bounded by
	\begin{equation}
		\begin{aligned}
			\sum\limits_{k=L-d_{\textup{max}}}^{L-1}\ell\left(\mathbf{\bar{u}}'_{k}(t+d_{\textup{max}}),\mathbf{\bar{y}}'_k(t+d_{\textup{max}})\right) &\leq \lambda_{\textup{max}}(Q,R)\norm*{\begin{bmatrix}
					\mathbf{\bar{u}}'_{[L-d_{\textup{max}},L-1]}(t+d_{\textup{max}})\\
					\mathbf{\bar{y}}'_{[L-d_{\textup{max}},L-1]}(t+d_{\textup{max}})
			\end{bmatrix}}_2^2\\ &= \lambda_{\textup{max}}(Q,R)\norm*{\begin{bmatrix}
			\mathbf{\bar{u}}'_{[L-d_{\textup{max}},L-1]}(t+d_{\textup{max}})\\
			{\bar{y}}'_{1,[L-d_{\textup{max}},L+d_1-2]}(t+d_{\textup{max}})\\
			\vdots\\
			{\bar{y}}'_{m,[L-d_{\textup{max}},L+d_m-2]}(t+d_{\textup{max}})
		\end{bmatrix}}_2^2
		\end{aligned}\label{2nd_sum_term_at_tdmax}
	\end{equation}
	where the last equality comes from appending $\bar{y}'_{i,[L,L+d_i-2]}(t+d_{\textup{max}})\stackrel{\eqref{pc2_term}}{=}\mathbf{0}$. By definition of $\Xi_k$ from \eqref{definition_of_plain_Xi}, one can write
	\begin{equation}
		\begin{aligned}
			\sum\limits_{k=L-d_{\textup{max}}}^{L-1}\ell\left(\mathbf{\bar{u}}'_{k}(t+d_{\textup{max}}),\mathbf{\bar{y}}'_k(t+d_{\textup{max}})\right) &\leq
			\lambda_{\textup{max}}(Q,R)\norm*{\begin{bmatrix}
					\mathbf{\bar{u}}'_{[L-d_{\textup{max}},L-1]}(t+d_{\textup{max}})\\
					{\Xi}'_{[L-d_{\textup{max}},L-1]}(t+d_{\textup{max}})
			\end{bmatrix}}_2^2\\
			&= \lambda_{\textup{max}}(Q,R)\underbrace{\norm*{
					\mathbf{\bar{u}}'_{[L-d_{\textup{max}},L-1]}(t+d_{\textup{max}})}_2^2}_{\stackrel{\eqref{ctrb_argument_tdmax}}{\leq} K_\gamma \Gamma_v \norm*{\breve{\Xi}_{t+L}}_2^2 } + \underbrace{ \norm*{
					{\Xi}'_{[L-d_{\textup{max}},L-1]}(t+d_{\textup{max}})}_2^2}_{\stackrel{\eqref{ctrb_argument_tdmax}}{\leq} \Gamma_v \norm*{\breve{\Xi}_{t+L}}_2^2 }\\
			&\leq \lambda_{\max}(Q,R)(K_\gamma+1)\Gamma_{v}\norm*{\breve{\Xi}_{t+L}}_2^2\\
		\end{aligned}\label{pre_2nd_sum_term_at_tdmax_ub}
	\end{equation}
	Recall that $\bar{y}_{i,[L,L+d_i-1]}^*(t)=0$ are fixed by the terminal constraints of the previous MPC iteration, and hence, by Lemma \ref{lemma_output_difference}, the instances $\breve{y}_{i,[t+L,t+L+d_i-1]}$ for all $i\in\mathbb{Z}_{[1,m]}$ satisfy the following inequality for all $k\in\mathbb{Z}_{[L,L+d_i-1]}$
	\begin{equation*}
		\left|\breve{y}_{i,t+k}\right| \leq \mathcal{P}^{k}(K_{\Xi})\Big(\varepsilon^*(1+\norm*{\mathcal{G}}_\infty) + (1 + K_w)w^*\norm*{\alpha^*(t)}_1 + (1+\norm*{g_i}_1)\norm*{\sigma^*(t)}_\infty\Big).
	\end{equation*}
	Therefore, $\norm*{\breve{\Xi}_{t+L}}_2^2$ in \eqref{pre_2nd_sum_term_at_tdmax_ub} can be expressed as
	\begin{equation}
		\begin{aligned}
			\norm*{\breve{\Xi}_{t+L}}_2^2&=\norm*{\begin{bmatrix}\breve{y}_{1,[t+L,t+L+d_1-1]}\\\vdots\\\breve{y}_{m,[t+L,t+L+d_m-1]}\end{bmatrix}}_2^2\\
			&\leq m \norm*{\begin{bmatrix}\breve{y}_{1,[t+L,t+L+d_1-1]}\\\vdots\\\breve{y}_{m,[t+L,t+L+d_m-1]}\end{bmatrix}}_\infty^2\\
			&= m\max\limits_i\left|\breve{y}_{i,[t+L,t+L+d_i-1]}\right|^2\\
			&\stackrel{\eqref{output_difference}}{\leq} m \max\limits_i \left[\mathcal{P}^{L+d_{\textup{max}}-1}(K_{\Xi})\Big(\varepsilon^*(1+\norm*{\alpha^*(t)}_1) + (1 + K_w)w^*\norm*{\alpha^*(t)}_1 + (1+\norm*{\mathcal{G}}_\infty)\norm*{\sigma^*(t)}_\infty\Big)\right]^2,
		\end{aligned}
	\end{equation}
	and the following bound for \eqref{2nd_sum_term_at_tdmax} is obtained
		\begin{gather}
			\hspace{-120mm}\sum\limits_{k=L-d_{\textup{max}}}^{L-1}\hspace{-2mm}\ell\left(\mathbf{\bar{u}}'_{k}(t+d_{\textup{max}}),\mathbf{\bar{y}}'_k(t+d_{\textup{max}})\right) \leq \notag\\
			\hspace{5mm}\lambda_{\max}(Q,R)(K_\gamma+1)\Gamma_{v} m \max\limits_i \left[\mathcal{P}^{L+d_{\textup{max}}-1}(K_{\Xi})\Big(\varepsilon^*(1+\norm*{\alpha^*(t)}_1) + (1 + K_w)w^*\norm*{\alpha^*(t)}_1 + (1+\norm*{\mathcal{G}}_\infty)\norm*{\sigma^*(t)}_\infty\Big)\right]^2.\label{2nd_sum_term_at_tdmax_ub}
		\end{gather}
	Finally, substituting \eqref{2nd_sum_term_at_tdmax_ub} and \eqref{1st_sum_term_tdmax_ub} in \eqref{sum_terms_at_tdmax} results in
	\begin{align}
			&\sum\limits_{k=0}^{L-1}\left(\ell(\mathbf{\bar{u}}'_k(t+d_{\textup{max}}),\mathbf{\bar{y}}'_k(t+d_{\textup{max}}))-\ell(\mathbf{\bar{u}}^*_k(t),\mathbf{\bar{y}}^*_k(t))\right)\label{sum_terms_at_tdmax_ub}\\
			&\leq \sum\limits_{k=0}^{L-d_{\textup{max}}-1}\left\lbrace m\lambda_{\max}(Q)\max\limits_i\left[\mathcal{P}^{k+2d_{\textup{max}}-d_i}(K_{\Xi})\Big(\varepsilon^*(1+\norm*{\alpha^*(t)}_1) + (1 + K_w)w^*\norm*{\alpha^*(t)}_1 + (1+\norm*{\mathcal{G}}_\infty)\norm*{\sigma^*(t)}_\infty\Big)\right]^2 \right.\notag\\
			& \quad\left. + \sqrt{m\lambda_{\max}(Q)}\max\limits_i\left[\mathcal{P}^{k+2d_{\textup{max}}-d_i}(K_{\Xi})\Big(\varepsilon^*(1+\norm*{\alpha^*(t)}_1) + (1 + K_w)w^*\norm*{\alpha^*(t)}_1 + (1+\norm*{\mathcal{G}}_\infty)\norm*{\sigma^*(t)}_\infty\Big)\right](1+V_{ROA}) \right\rbrace\notag\\
			&\quad + \lambda_{\max}(Q,R)(K_\gamma+1)\Gamma_{v} m \max\limits_i \left[\mathcal{P}^{L+d_{\textup{max}}-1}(K_{\Xi})\Big(\varepsilon^*(1+\norm*{\alpha^*(t)}_1) + (1 + K_w)w^*\norm*{\alpha^*(t)}_1 + (1+\norm*{\mathcal{G}}_\infty)\norm*{\sigma^*(t)}_\infty\Big)\right]^2\notag\\
			&\quad - \sum\limits_{k=0}^{d_{\textup{max}}-1}\ell\left(\mathbf{\bar{u}}^*_k(t),\mathbf{\bar{y}}_k^*(t)\right).\notag
	\end{align}
	At this point, we have obtained upper bounds for the summation term of \eqref{optimality_at_tdmax}. Next, we wish to find suitable upper bounds for $\sigma'(t+d_{\textup{max}})$ and $\alpha'(t+d_{\textup{max}})$. For $\sigma'(t+d_{\textup{max}})$, one can use similar arguments as in \eqref{ub_sigma_t} to obtain
	\begin{equation}
		\begin{aligned}
			\norm*{\sigma'(t+d_{\textup{max}})}_2^2 &\leq 2\left((r+n)(L+d_{\textup{max}}) + n\right)\left((K_{\Psi}w^*)^2 + 2(\varepsilon^* + K_w w^*)^2\norm*{\mathcal{G}^\dagger}_\infty^2(1+\norm*{\alpha'(t+d_{\textup{max}})}_1^2)\right)
		\end{aligned}\label{sigma_tdmax_ub}
	\end{equation}
	As for $\norm*{\alpha'(t+d_{\textup{max}})}_2^2$, we use the definition from \eqref{candidate_alpha_tdmax} to obtain
	\begin{equation}
		\begin{aligned}
			&\norm*{\alpha'(t+d_{\textup{max}})}_2^2 \leq \\
			&\quad\norm*{\begin{bmatrix}
					H_{L+d_{\textup{max}}}(\hat{\Psi}(\mathbf{u}^{\textup{d}},\tilde{\Xi}^{\textup{d}}) + \hat{E}(\mathbf{u}^{\textup{d}},\tilde{\Xi}^{\textup{d}}) + \hat{D}(\omega^{\textup{d}}) ) \\ H_1(\Xi_{[0,N-L-d_{\textup{max}}]}^{\textup{d}})
				\end{bmatrix}^{\dagger}}_2^2\norm*{\begin{bmatrix}
				\hat{\Psi}(\mathbf{\bar{u}}'(t+d_{\textup{max}}),\breve{\Xi}) + \hat{E}(\mathbf{\bar{u}}'(t+d_{\textup{max}}),\breve{\Xi}) + \hat{D}(\breve{\omega})\\ {\Xi}_{t}
			\end{bmatrix}}_2^2.
		\end{aligned}\label{pre_alpha_tdmax_ub}
	\end{equation}
	Using similar arguments\footnote{The difference from \eqref{how_we_now_bound_RHS_vector} is that here, we have $\Xi_t = \breve{\Xi}_t$ since $y_{i,[t,t+d_{\textup{max}}-1]}=\breve{y}_{i,[t,t+d_{\textup{max}}-1]}$ as in \eqref{cand_sols_tdmax}.} as in \eqref{how_we_now_bound_RHS_vector}, we can bound the rightmost term in \eqref{pre_alpha_tdmax_ub} by
	\begin{equation}
		\begin{aligned}
			\norm*{\begin{bmatrix}
					\hat{\Psi}(\mathbf{\bar{u}}'(t+d_{\textup{max}}),\breve{\Xi}) + \hat{E}(\mathbf{\bar{u}}'(t+d_{\textup{max}}),\breve{\Xi}) + \hat{D}(\breve{\omega})\\ {\Xi}_{t}
			\end{bmatrix}}_2^2 &\leq  \underbrace{4\mu^2 + 4r(L + d_{\textup{max}})\norm*{\mathcal{G}^\dagger}_\infty^2((\varepsilon^*)^2+(K_w w^*)^2)}_{=\mu_1} + \underbrace{8K_\Psi\Gamma_v}_{\coloneqq\mu_3}\norm*{\Xi_t}_2^2
		\end{aligned}\label{rewriting_RHS_alpha_tdmax}
	\end{equation}
	By plugging \eqref{rewriting_RHS_alpha_tdmax} back into \eqref{pre_alpha_tdmax_ub}, we get
	\begin{equation}
		\begin{aligned}
			\norm*{\alpha'(t+d_{\textup{max}})}_2^2 &\leq c_{\textup{pe}}\mu_1 + c_{\textup{pe}}\mu_3\norm*{\Xi_t}_2^2.
		\end{aligned}\label{alpha_tdmax_ub}
	\end{equation}
	At this point, we have provided suitable upper bounds for the candidate solutions at $t+d_{\textup{max}}$. Before plugging them back into \eqref{optimality_at_tdmax}, we use the following bound on $\sigma^*(t)$ from the MPC previous iteration (cf. \eqref{ub_sigma_t})
	\begin{equation}
		\norm*{\sigma^*(t)}_2^2\leq 2\left((r+n)(L+d_{\textup{max}}) + n\right)\left((K_{\Psi}w^*)^2 + 2(\varepsilon^* + K_w w^*)^2\norm*{\mathcal{G}^\dagger}_\infty^2(1+\norm*{\alpha^*(t)}_1^2)\right)\label{sigma_star_bound_used_in_J_difference}
	\end{equation}
	We can now plug (\ref{sum_terms_at_tdmax_ub}) into \eqref{optimality_at_tdmax} to obtain
	\begin{equation}
		\begin{aligned}
			&J^*_{t+d_{\textup{max}}}\\
			&\stackrel{\eqref{sum_terms_at_tdmax_ub}}{\leq} \sum\limits_{k=0}^{L-d_{\textup{max}}-1}\left\lbrace m\lambda_{\max}(Q)\max\limits_i\left[\mathcal{P}^{k+2d_{\textup{max}}-d_i}(K_{\Xi})\Big(\varepsilon^*(1+\norm*{\alpha^*(t)}_1) + (1 + K_w)w^*\norm*{\alpha^*(t)}_1 + (1+\norm*{\mathcal{G}}_\infty)\norm*{\sigma^*(t)}_\infty\Big)\right]^2 \right.\\
			& \quad\left. + \sqrt{m\lambda_{\max}(Q)}\max\limits_i\left[\mathcal{P}^{k+2d_{\textup{max}}-d_i}(K_{\Xi})\Big(\varepsilon^*(1+\norm*{\alpha^*(t)}_1) + (1 + K_w)w^*\norm*{\alpha^*(t)}_1 + (1+\norm*{\mathcal{G}}_\infty)\norm*{\sigma^*(t)}_\infty\Big)\right](1+V_{ROA}) \right\rbrace\\
			&\quad + \lambda_{\max}(Q,R)(K_\gamma+1)\Gamma_{v} m \max\limits_i \left[\mathcal{P}^{L+d_{\textup{max}}-1}(K_{\Xi})\Big(\varepsilon^*(1+\norm*{\alpha^*(t)}_1) + (1 + K_w)w^*\norm*{\alpha^*(t)}_1 + (1+\norm*{\mathcal{G}}_\infty)\norm*{\sigma^*(t)}_\infty\Big)\right]^2\\
			&\quad + J_t^* - \sum\limits_{k=0}^{d_{\textup{max}}-1}\ell\left(\mathbf{\bar{u}}^*_k(t),\mathbf{\bar{y}}_k^*(t)\right)+ \lambda_\alpha\max\{\varepsilon^*,w^*\} \left( \norm*{\alpha'(t+d_{\textup{max}})}_2^2 - \norm*{\alpha^*(t)}_2^2 \right) + \lambda_\sigma \left(\norm*{\sigma'(t+d_{\textup{max}})}_2^2 - \norm*{\sigma^*(t)}_2^2\right)
		\end{aligned}
	\end{equation}
	Further, we make use of the inequality $(a+b+c)^2 \leq 2a^2 + 4b^2 + 4c^2$ for $a,b,c\in\mathbb{R}$ and expand the above terms as follows
	\begin{equation}
		\begin{aligned}
			&J^*_{t+d_{\textup{max}}} \leq J_t^* + \hspace{-3mm}\overbrace{\sum\limits_{k=0}^{L-d_{\textup{max}}-1}\hspace{-3mm} 2m\lambda_{\max}(Q) \left(\mathcal{P}^{k+d_{\textup{max}}}(K_{\Xi})\right)^2}^{\coloneqq c_6}\Big(2(\varepsilon^*)^2(1+\norm*{\alpha^*(t)}_1)^2 + 2(1 + K_w)^2(w^*)^2\norm*{\alpha^*(t)}_1^2 + 2(1+\norm*{\mathcal{G}}_\infty)^2\norm*{\sigma^*(t)}_\infty^2\Big)\\
			&+ \hspace{-3mm}\overbrace{\sum\limits_{k=0}^{L-d_{\textup{max}}-1}\hspace{-3mm} \sqrt{m\lambda_{\max}(Q)} \mathcal{P}^{k+d_{\textup{max}}}(K_{\Xi})(1+V_{ROA})}^{\coloneqq c_7}\Big(\varepsilon^*(1+\norm*{\alpha^*(t)}_1) + (1 + K_w)w^*\norm*{\alpha^*(t)}_1 + (1+\norm*{\mathcal{G}}_\infty)\norm*{\sigma^*(t)}_\infty\Big)\\
			&+ \overbrace{2m\lambda_{\max}(Q,R)(K_\gamma+1)\Gamma_{v}(\mathcal{P}^{L+d_{\textup{max}}-1}(K_{\Xi}))^2}^{\coloneqq c_8}\Big(2(\varepsilon^*)^2(1+\norm*{\alpha^*(t)}_1)^2 + 2(1 + K_w)^2(w^*)^2\norm*{\alpha^*(t)}_1^2 + 2(1+\norm*{\mathcal{G}}_\infty)^2\norm*{\sigma^*(t)}_\infty^2\Big)\\
			&- \sum\limits_{k=0}^{d_{\textup{max}}-1}\ell\left(\mathbf{\bar{u}}^*_k(t),\mathbf{\bar{y}}_k^*(t)\right)+ \lambda_\alpha\max\{\varepsilon^*,w^*\} \left( \norm*{\alpha'(t+d_{\textup{max}})}_2^2 - \norm*{\alpha^*(t)}_2^2 \right) + \lambda_\sigma \left(\norm*{\sigma'(t+d_{\textup{max}})}_2^2 - \norm*{\sigma^*(t)}_2^2\right)
		\end{aligned}
	\end{equation}
	Next, we use the upper bounds in \eqref{sigma_tdmax_ub} and \eqref{alpha_tdmax_ub} and write
	\begin{equation}
		\begin{aligned}
			J^*_{t+d_{\textup{max}}} &\leq J_t^* + (c_6 + c_8)\Big(2(\varepsilon^*)^2(1+\norm*{\alpha^*(t)}_1)^2 + 2(1 + K_w)^2(w^*)^2\norm*{\alpha^*(t)}_1^2 + 2(1+\norm*{\mathcal{G}}_\infty)^2\norm*{\sigma^*(t)}_\infty^2\Big)\\
			&+ c_7\Big(\varepsilon^*(1+\norm*{\alpha^*(t)}_1) + (1 + K_w)w^*\norm*{\alpha^*(t)}_1 + (1+\norm*{\mathcal{G}}_\infty)\norm*{\sigma^*(t)}_\infty\Big)- \sum\limits_{k=0}^{d_{\textup{max}}-1}\ell\left(\mathbf{\bar{u}}^*_k(t),\mathbf{\bar{y}}_k^*(t)\right)\\
			&+ (\lambda_\alpha\max\{\varepsilon^*,w^*\} + c_9)c_{\textup{pe}}(\mu_1 + \mu_3\norm*{\Xi_t}_2^2) - \lambda_\alpha\max\{\varepsilon^*,w^*\}\norm*{\alpha^*(t)}_2^2 -\lambda_\sigma\norm*{\sigma^*(t)}_2^2 + c_{10}
		\end{aligned}\label{prepare_norm_equivalence}
	\end{equation}
where 
\begin{equation*}
	\begin{aligned}
		c_9 &\coloneqq 4\lambda_\sigma\left((r+n)(L+d_{\textup{max}}) + n\right)\left(\varepsilon^* + K_w w^*\right)^2\norm*{\mathcal{G}^\dagger}_\infty^2\\
		c_{10} &\coloneqq 2\lambda_\sigma\left((r+n)(L+d_{\textup{max}}) + n\right)\left( (K_\Psi w^*)^2 + 2(\varepsilon^* + K_w w^*)^2\right)\norm*{\mathcal{G}^\dagger}_\infty^2
	\end{aligned}
\end{equation*}
We will make use of the following relations which can be obtained by basic arithmetic and norm equivalences. First, notice that
\begin{equation*}
	\begin{aligned}
		(1+\norm*{\alpha^*(t)}_1)^2 &= (1+2\norm*{\alpha^*(t)}_1+\norm*{\alpha^*(t)}_1^2)\\
		&\leq 1+(1+\norm*{\alpha^*(t)}_1^2)+\norm*{\alpha^*(t)}_1^2\\
		&= 2(1+\norm{\alpha^*(t)}_1^2)\\
		&\leq 2(1+(N-L-d_{\textup{max}}+1)\norm*{\alpha^*(t)}_2^2) \coloneqq 2 + 2c_{11}\norm*{\alpha^*(t)}_2^2,
	\end{aligned}
\end{equation*}
where the second step was obtained by using the inequality $2a\leq 1+a^2$ for $a\in\mathbb{R}$ and the last step was obtained by norm equivalence. In particular, $\norm*{\alpha^*(t)}_1^2\leq c_{11}\norm*{\alpha^*(t)}_2^2$. Similarly, one can write
\begin{equation*}
	\begin{aligned}
		2\norm*{\alpha^*(t)}_1 &\leq 1+\norm*{\alpha^*(t)}_1^2\\
		\norm*{\alpha^*(t)}_1 &\leq 0.5+0.5\norm*{\alpha^*(t)}_1^2\\
		1+\norm*{\alpha^*(t)}_1 &\leq 1.5+0.5\norm*{\alpha^*(t)}_1^2\\
		1+\norm*{\alpha^*(t)}_1 &\leq 1.5+0.5c_{11}\norm*{\alpha^*(t)}_2^2.
	\end{aligned}
\end{equation*}
Also, notice that
\begin{equation*}
	\begin{aligned}
		\norm*{\sigma^*(t)}_\infty \stackrel{\eqref{slack_const}}{\leq} &K_{\Psi}w^* + (\varepsilon^* + K_w w^*)\norm*{\mathcal{G}^\dagger}_\infty(1+\norm*{\alpha^*(t)}_1)\\
		\leq\; &K_{\Psi}w^* + (\varepsilon^* + K_w w^*)\norm*{\mathcal{G}^\dagger}_\infty(1.5+0.5c_{11}\norm*{\alpha^*(t)}_2^2),
	\end{aligned}
\end{equation*}
and finally $\norm*{\sigma^*(t)}_\infty^2 \leq \norm*{\sigma^*(t)}_2^2$. Now, one can rewrite \eqref{prepare_norm_equivalence} as
\begin{equation}
	\begin{aligned}
		J^*_{t+d_{\textup{max}}} &\leq J_t^* + (c_6 + c_8)\Big(4(\varepsilon^*)^2(1+\norm*{\alpha^*(t)}_2^2) + 2(1 + K_w)^2(w^*)^2\norm*{\alpha^*(t)}_1^2 + 2(1+\norm*{\mathcal{G}}_\infty)^2\norm*{\sigma^*(t)}_2^2\Big)\\
		&+ c_7\Big(\varepsilon^*(1.5+0.5c_{11}\norm*{\alpha^*(t)}_2^2) + 0.5(1 + K_w)w^*(1+c_{11}\norm*{\alpha^*(t)}_2^2) \\
		& + (1+\norm*{\mathcal{G}}_\infty)\left(K_{\Psi}w^* + (\varepsilon^* + K_w w^*)\norm*{\mathcal{G}^\dagger}_\infty(1.5+0.5c_{11}\norm*{\alpha^*(t)}_2^2)\right)\Big) - \sum\limits_{k=0}^{d_{\textup{max}}-1}\ell\left(\mathbf{\bar{u}}^*_k(t),\mathbf{\bar{y}}_k^*(t)\right)\\
		&+ (\lambda_\alpha\max\{\varepsilon^*,w^*\} + c_9)c_{\textup{pe}}(\mu_1 + \mu_3\norm*{\Xi_t}_2^2) - \lambda_\alpha\max\{\varepsilon^*,w^*\}\norm*{\alpha^*(t)}_2^2 -\lambda_\sigma\norm*{\sigma^*(t)}_2^2 + c_{10}.
	\end{aligned}\label{cancellingout_terms}
\end{equation}
This can be compactly written as
\begin{equation}
	\begin{aligned}
		J^*_{t+d_{\textup{max}}} - J_t^* &\leq - \sum\limits_{k=0}^{d_{\textup{max}}-1}\ell\left(\mathbf{\bar{u}}^*_k(t),\mathbf{\bar{y}}_k^*(t)\right)+ (\lambda_\alpha\max\{\varepsilon^*,w^*\} + c_9)c_{\textup{pe}}(\mu_1 + \mu_3\norm*{\Xi_t}_2^2) + c_{12}\\
		&\quad + (c_{13} - \lambda_\alpha\max\{\varepsilon^*,w^*\})\norm*{\alpha^*(t)}_2^2 + (c_{14} - \lambda_\sigma)\norm*{\sigma^*(t)}_2^2,
	\end{aligned}\label{J_difference_2}
\end{equation}
where 
\begin{equation*}
	\begin{aligned}
		c_{12} &\coloneqq (c_6+c_8)\left(4(\varepsilon^*)^2\right) + c_7\left(1.5\varepsilon^* + 0.5w^*(1+K_w) + (1+\norm*{\mathcal{G}}_\infty)\left(K_\Psi w^* + 1.5\left(\varepsilon^*+K_w w^*\right)\norm*{\mathcal{G}^\dagger}_\infty\right)\right) + c_{10}\\
		c_{13} &\coloneqq (c_6+c_8)\left(4(\varepsilon^*)^2 + 2(1+K_w)^2(w^*)^2\right) + 0.5c_7c_{11}\left(\varepsilon^* + (1+K_w)w^* + (1+\norm*{\mathcal{G}}_\infty)\left(\norm*{\mathcal{G}^\dagger}_\infty\varepsilon^*+K_w w^*\right)\right)\\
		c_{14} &\coloneqq 2(c_6+c_8)(1+\norm*{\mathcal{G}}_\infty)^2.
	\end{aligned}
\end{equation*}
Note that $c_{12}$ depends on both $\varepsilon^*$ and $w^*$ and vanishes as $\max\{\varepsilon^*,w^*\}\to0$.\par
	Now, the goal is to show that the Lyapunov function candidate satisfies a useful decay bound. The successive difference in the Lyapunov function is
	\begin{equation*}
		\begin{aligned}
			V_{t+d_{\textup{max}}} - V_t = \underbrace{J^*_{t+d_{\textup{max}}} - J^*_t}_{\textup{see }\eqref{J_difference_2}} + c_3(\underbrace{W_{t+d_{\textup{max}}}-W_t}_{\textup{see }\eqref{IOSS}}),
		\end{aligned}
	\end{equation*}
	which is bounded by
	\begin{equation}
		\begin{aligned}
			V_{t+d_{\textup{max}}} - V_t &\leq - \sum\limits_{k=0}^{d_{\textup{max}}-1}\ell\left(\mathbf{\bar{u}}^*_k(t),\mathbf{\bar{y}}_k^*(t)\right)+ (\lambda_\alpha\max\{\varepsilon^*,w^*\} + c_9)c_{\textup{pe}}(\mu_1 + \mu_3\norm*{\Xi_t}_2^2) + c_{12} \\
			&\quad + (c_{13} - \lambda_\alpha\max\{\varepsilon^*,w^*\})\norm*{\alpha^*(t)}_2^2 + (c_{14} - \lambda_\sigma)\norm*{\sigma^*(t)}_2^2\\
			&\quad + c_3\left(-\frac{1}{2}\norm*{\Xi_{[t,t+d_{\textup{max}}-1]}}_2^2 + {c}_1\norm*{\mathbf{v}_{[t,t+d_{\textup{max}}-1]}}_2^2 + {c}_2\norm*{\mathbf{y}_{[t,t+d_{\textup{max}}-1]}}_2^2\right).
		\end{aligned}\label{V_difference_1}
	\end{equation}
	Rewrite the term
	\begin{equation*}
		\begin{aligned}
			\norm*{\mathbf{y}_{[t,t+d_{\textup{max}}-1]} - \mathbf{\bar{y}}^*_{[0,d_{\textup{max}}-1]}(t) + \mathbf{\bar{y}}^*_{[0,d_{\textup{max}}-1]}(t)}_2^2 \leq 2\norm*{\mathbf{y}_{[t,t+d_{\textup{max}}-1]} - \mathbf{\bar{y}}^*_{[0,d_{\textup{max}}-1]}(t)}_2^2 + 2\norm*{\mathbf{\bar{y}}^*_{[0,d_{\textup{max}}-1]}(t)}_2^2,
		\end{aligned}
	\end{equation*}
	using $(a+b)^2\leq 2a^2+2b^2$ for $a,b\geq0$ and plug back into \eqref{V_difference_1} to get
	\begin{equation}
		\begin{aligned}
			V_{t+d_{\textup{max}}} - V_t &\leq - \sum\limits_{k=0}^{d_{\textup{max}}-1}\ell\left(\mathbf{\bar{u}}^*_k(t),\mathbf{\bar{y}}_k^*(t)\right)+ (\lambda_\alpha\max\{\varepsilon^*,w^*\} + c_9)c_{\textup{pe}}(\mu_1 + \mu_3\norm*{\Xi_t}_2^2) + c_{12}  \\
			&\quad + (c_{13} - \lambda_\alpha\max\{\varepsilon^*,w^*\})\norm*{\alpha^*(t)}_2^2 + (c_{14} - \lambda_\sigma)\norm*{\sigma^*(t)}_2^2\\
			&\quad + c_3\left(-\frac{1}{2}\norm*{\Xi_{[t,t+d_{\textup{max}}-1]}}_2^2 + {c}_1\norm*{\mathbf{v}_{[t,t+d_{\textup{max}}-1]}}_2^2\right)\\
			&\quad + 2c_2c_3\left(\norm*{\mathbf{y}_{[t,t+d_{\textup{max}}-1]} - \mathbf{\bar{y}}^*_{[0,d_{\textup{max}}-1]}(t)}_2^2 + \norm*{\mathbf{\bar{y}}^*_{[0,d_{\textup{max}}-1]}(t)}_2^2\right).
		\end{aligned}\label{V_difference_2}
	\end{equation}
	Recall from \eqref{cand_sols_tdmax} that $\mathbf{y}_{[t,t+d_{\textup{max}}-1]} = \mathbf{\breve{y}}_{[t,t+d_{\textup{max}}-1]}$.	This means that
	\begin{equation*}
		\begin{aligned}
			&\norm*{\mathbf{y}_{[t,t+d_{\textup{max}}-1]} - \mathbf{\bar{y}}^*_{[0,d_{\textup{max}}-1]}(t)}_2^2 = \norm*{\mathbf{\breve{y}}_{[t,t+d_{\textup{max}}-1]} - \mathbf{\bar{y}}^*_{[0,d_{\textup{max}}-1]}(t)}_2^2\\
			&= \norm*{\begin{bmatrix}
					\breve{y}_{1,[t,t+d_{\textup{max}}-1]} - {\bar{y}}^*_{1,[0,d_{\textup{max}}-1]}(t) \\ \vdots \\ \breve{y}_{m,[t,t+d_{\textup{max}}-1]} - {\bar{y}}^*_{m,[0,d_{\textup{max}}-1]}(t)
			\end{bmatrix}}_2^2\\
			&\leq m\max\limits_i\norm*{\tilde{y}_{i,[t,t+d_{\textup{max}}-1]} - {\bar{y}}^*_{i,[0,d_{\textup{max}}-1]}(t)}_\infty^2\\
			&\leq m \max\limits_i \left[\mathcal{P}^{2d_{\textup{max}}-d_i-1}(K_{\Xi})\Big(\varepsilon^*(1+\norm*{\alpha^*(t)}_1) + (1 + K_w)w^*\norm*{\alpha^*(t)}_1 + (1+\norm*{g_i}_1)\norm*{\sigma^*(t)}_\infty\Big)\right]^2\\
			&\leq m(\mathcal{P}^{d_{\textup{max}}-1}(K_{\Xi}))^2\Big(2(\varepsilon^*)^2(1+\norm*{\alpha^*(t)}_1)^2 + 2(1 + K_w)^2(w^*)^2\norm*{\alpha^*(t)}_1^2 + 2(1+\norm*{\mathcal{G}}_\infty)^2\norm*{\sigma^*(t)}_\infty^2\Big)\\
			&\leq m(\mathcal{P}^{d_{\textup{max}}-1}(K_{\Xi}))^2\Big(2(\varepsilon^*)^2(2+2c_{10}\norm*{\alpha^*(t)}_2^2) + 2(1 + K_w)^2(w^*)^2c_{10}\norm*{\alpha^*(t)}_2^2 \\
			& \hspace{35mm} + 2(1+\norm*{\mathcal{G}}_\infty)^2\left(K_{\Psi}w^* + (\varepsilon^* + K_w w^*)\norm*{\mathcal{G}^\dagger}_\infty(1.5+0.5c_{10}\norm*{\alpha^*(t)}_2^2)\right)^2\Big)\\
			&\coloneqq c_{15}\norm*{\alpha^*(t)}_2^2 + c_{16}
		\end{aligned}
	\end{equation*}
	where $c_{16}$ depends on $\varepsilon^*$ and $w^*$ and vanishes as $\max\{\varepsilon^*,w^*\}\to0$. Plugging back into \eqref{V_difference_2} results in
	\begin{equation}
		\begin{aligned}
			V_{t+d_{\textup{max}}} - V_t &\leq - \sum\limits_{k=0}^{d_{\textup{max}}-1}\ell\left(\mathbf{\bar{u}}^*_k(t),\mathbf{\bar{y}}_k^*(t)\right)+ (\lambda_\alpha\max\{\varepsilon^*,w^*\} + c_9)c_{\textup{pe}}(\mu_1 + \mu_3\norm*{\Xi_t}_2^2) + 2c_2c_3c_{16} + c_{12}  \\
			&\quad + (2c_2c_3c_{15} + c_{13} - \lambda_\alpha\max\{\varepsilon^*,w^*\})\norm*{\alpha^*(t)}_2^2 + (c_{14} - \lambda_\sigma)\norm*{\sigma^*(t)}_2^2\\
			&\quad + c_3\left(-\frac{1}{2}\norm*{\Xi_{[t,t+d_{\textup{max}}-1]}}_2^2 + {c}_1\norm*{\mathbf{v}_{[t,t+d_{\textup{max}}-1]}}_2^2\right) + 2c_2c_3\norm*{\mathbf{\bar{y}}^*_{[0,d_{\textup{max}}-1]}(t)}_2^2.
		\end{aligned}\label{V_difference_3}
	\end{equation}
By choosing $\lambda_\alpha,\,\lambda_\sigma$ sufficiently large and with $c_{\textup{pe}},\,\varepsilon^*,\,w^*$ being sufficiently small, one can cancel out the terms containing $\norm*{\alpha^*(t)}_2^2$ and $\norm*{\sigma^*(t)}_2^2$ on the RHS of \eqref{V_difference_3}. Therefore, we obtain
\begin{equation}
	\begin{aligned}
		V_{t+d_{\textup{max}}} - V_t &\leq - \sum\limits_{k=0}^{d_{\textup{max}}-1}\ell\left(\mathbf{\bar{u}}^*_k(t),\mathbf{\bar{y}}_k^*(t)\right)+ (\lambda_\alpha\max\{\varepsilon^*,w^*\} + c_9)c_{\textup{pe}}(\mu_1 + \mu_3\norm*{\Xi_t}_2^2) + 2c_2c_3c_{16} + c_{12}  \\
		&\quad + c_3\left(-\frac{1}{2}\norm*{\Xi_{[t,t+d_{\textup{max}}-1]}}_2^2 + {c}_1\norm*{\mathbf{v}_{[t,t+d_{\textup{max}}-1]}}_2^2\right) + 2c_2c_3\norm*{\mathbf{\bar{y}}^*_{[0,d_{\textup{max}}-1]}(t)}_2^2.
	\end{aligned}\label{V_difference_4}
\end{equation}
By local Lipschitz continuity of the function $\Phi(\cdot,\cdot)$, we can bound
\begin{equation*}
	\norm*{\mathbf{v}_{[t,t+d_{\textup{max}}-1]}}_2^2 \leq K_\Phi\left({\norm*{\mathbf{u}_{[t,t+d_{\textup{max}}-1]}}_2^2} + {\norm*{\Xi_{[t,t+d_{\textup{max}}-1]}}_2^2}\right) \stackrel{\eqref{pc2_ini}}{=} K_\Phi\left(\norm*{\mathbf{\mathbf{u}}^*_{[0,d_{\textup{max}}-1]}(t)}_2^2 + {\norm*{\Xi_{[t,t+d_{\textup{max}}-1]}}_2^2}\right).
\end{equation*}
Plugging back into \eqref{V_difference_4}, we obtain
\begin{equation}
	\begin{aligned}
		V_{t+d_{\textup{max}}} - V_t &\leq - \sum\limits_{k=0}^{d_{\textup{max}}-1}\ell\left(\mathbf{\bar{u}}^*_k(t),\mathbf{\bar{y}}_k^*(t)\right)+ (\lambda_\alpha\max\{\varepsilon^*,w^*\} + c_9)c_{\textup{pe}}(\mu_1 + \mu_3\norm*{\Xi_t}_2^2) + 2c_2c_3c_{16} + c_{12}  \\
		&\quad + c_3\left((c_1K_\Phi -0.5)\norm*{\Xi_{[t,t+d_{\textup{max}}-1]}}_2^2 + {c}_1K_\Phi\norm*{\mathbf{\mathbf{u}}^*_{[0,d_{\textup{max}}-1]}(t)}_2^2\right) + 2c_2c_3\norm*{\mathbf{\bar{y}}^*_{[0,d_{\textup{max}}-1]}(t)}_2^2\\
		&\stackrel{\eqref{ctrb_argument_t}}{\leq} - \sum\limits_{k=0}^{d_{\textup{max}}-1}\ell\left(\mathbf{\bar{u}}^*_k(t),\mathbf{\bar{y}}_k^*(t)\right)
		+ \Big((\lambda_\alpha\max\{\varepsilon^*,w^*\} + c_9)c_{\textup{pe}}\mu_3+c_3\Gamma_v(c_1K_\Phi-0.5)\Big)\norm*{\Xi_t}_2^2\\ &\quad+(\lambda_\alpha\max\{\varepsilon^*,w^*\} + c_9)c_{\textup{pe}}\mu_1 + 2c_2c_3c_{16} + c_{12}  + c_3\left( {c}_1K_\Phi\norm*{\mathbf{\mathbf{u}}^*_{[0,d_{\textup{max}}-1]}(t)}_2^2 + 2c_2\norm*{\mathbf{\bar{y}}^*_{[0,d_{\textup{max}}-1]}(t)}_2^2\right)\\
	\end{aligned}\label{V_difference_5}
\end{equation}
By choosing $c_3=\frac{\lambda_{\textup{min}}(Q,R)}{\max\{c_1 K_\Phi,\,2c_2\}}$ one can obtain the following
\begin{equation*}
	c_3\left({c}_1K_\Phi\norm*{\mathbf{\mathbf{u}}^*_{[0,d_{\textup{max}}-1]}(t)}_2^2 + 2c_2\norm*{\mathbf{\bar{y}}^*_{[0,d_{\textup{max}}-1]}(t)}_2^2\right) \leq \sum\limits_{k=0}^{d_{\textup{max}}-1}\left(\ell(\mathbf{\bar{u}}^*_k(t),\mathbf{\bar{y}}^*_k(t))\right),
\end{equation*}
which cancels out the first term on the RHS of \eqref{V_difference_5} and results in
\begin{equation}
	\begin{aligned}
		V_{t+d_{\textup{max}}} - V_t &\leq \Big((\lambda_\alpha\max\{\varepsilon^*,w^*\} + c_9)c_{\textup{pe}}\mu_3+c_3\Gamma_v(c_1K_\Phi-0.5)\Big)\norm*{\Xi_t}_2^2 + (\lambda_\alpha\max\{\varepsilon^*,w^*\} + c_9)c_{\textup{pe}}\mu_1 + 2c_2c_3c_{16} + c_{12}\\
		\Rightarrow V_{t+d_{\textup{max}}} - V_t &\leq (c_{17} - 0.5c_{18})\norm*{\Xi_t}_2^2 + c_{19},
	\end{aligned}\label{V_difference_6}
\end{equation}
where
\begin{equation}
	\begin{aligned}
		c_{17} &\coloneqq (\lambda_\alpha\max\{\varepsilon^*,w^*\} + c_9)c_{\textup{pe}}\mu_3+c_3\Gamma_vc_1K_\Phi,\\
		c_{18} &\coloneqq c_3\Gamma_v,\\
		c_{19} &\coloneqq (\lambda_\alpha\max\{\varepsilon^*,w^*\} + c_9)c_{\textup{pe}}\mu_1 + 2c_2c_3c_{16} + c_{12}.
	\end{aligned}\label{defs_c171819}
\end{equation}
We now construct the following function $\beta(\varepsilon^*, w^*)$. Notice that for any $\Xi_t\in\mathbb{B}_\delta$ with $V_t\leq V_{ROA}$, the following holds by \eqref{ub_V_t}
\begin{equation}
	V_t \leq \max\left\lbrace c_4,\,\frac{V_{ROA}-c_5}{\delta^2}\right\rbrace \norm*{\Xi_t}_2^2 + c_5.
\end{equation}
Similar to \cite{Berberich203}, we will start by building the function $\beta(\varepsilon^*, w^*)$ and show that $V_t$ converges to a neighborhood whose size depends on $\beta(\varepsilon^*, w^*)$ for a specific fixed choice of $V_{ROA}$. Then, we will generalize the result for any \textit{fixed} value of $V_{ROA}$. Let $V_{ROA}=\delta^2 c_4 + c_5$ and thus,
\begin{equation}
	V_t\leq c_4\norm*{\Xi_t}_2^2 + c_5
	\label{V_ub_fixed_VROA}
\end{equation}
Now we define the function $\beta(\varepsilon^*, w^*)$ as
\begin{equation}
	\beta(\varepsilon^*, w^*) \coloneqq \frac{c_4 c_{19} - c_5(c_{17} - 0.5 c_{18})}{(0.5 c_{18} - c_{17})},
	\label{beta_fcn}
\end{equation}
for any sufficiently small $\varepsilon^*$ and $w^*$ for which the denominator of \eqref{beta_fcn} is positive, i.e., $0.5 c_{18} - c_{17}>0$. Now we show that $\beta(\varepsilon^*, w^*)$ \textit{is} in fact positive if $\varepsilon^*$ is sufficiently small. Recall from the definitions of $c_{17},\,c_{18}$ in \eqref{defs_c171819} where
\begin{equation*}
	\begin{aligned}
		c_9 &= 4\lambda_\sigma\left((r+n)(L+d_{\textup{max}}) + n\right)\left(\varepsilon^* + K_w w^*\right)^2\norm*{\mathcal{G}^\dagger}_\infty^2,
	\end{aligned}
\end{equation*}
that if $\lambda_\alpha\leq\bar{\lambda}_\alpha,\,\lambda_\sigma\leq\bar{\lambda}_\sigma$ where $\bar{\lambda}_\alpha,\,\bar{\lambda}_\sigma$ are arbitrary but fixed upper bounds as in \cite{Berberich203}, and if $c_{\textup{pe}}\max\{\varepsilon^*,w^*\}\leq\bar{c}_{\textup{pe}},\,\varepsilon^*\leq\bar{\varepsilon},\,w^*\leq\bar{w}$ are sufficiently small, then $0.5 c_{18} - c_{17}>0$. Furthermore, the function $\beta(\varepsilon^*, w^*)$ has the following characteristics:
\begin{itemize}
	\item[] \textbf{(i)} It is continuous in $\varepsilon^*$ and $w^*$ by the definition of $c_i$ for $i\in\mathbb{Z}_{[1,19]}$.
	\item[] \textbf{(ii)} It satisfies $\beta(0,0)=0$. This can be seen from the definition of $c_i$ for $i\in\mathbb{Z}_{[1,19]}$.
	\item[] \textbf{(iii)} The function is strictly increasing since the numerator increases as $\max\{\varepsilon^*,w^*\}$ increases, while the denominator decreases (notice that the term $0.5c_3$ is independent of $\varepsilon^*,\,w^*$ and $c_{17}$ is negative), \textit{if} $0.5 c_{18} - c_{17}>0$ (which is the case for sufficiently small $\varepsilon^*,w^*$).
\end{itemize}
Next, we show invariance and exponential convergence of $V_t$ to \eqref{beta_fcn}. We start by considering \eqref{V_difference_6} and proceed with the following manipulations
\begin{equation}
	\begin{aligned}
		V_{t+d_{\textup{max}}} &\leq V_t + (c_{17} - 0.5c_{18})\norm*{\Xi_t}_2^2 + c_{19}\\
		V_{t+d_{\textup{max}}} &\leq V_t + (c_{17} - 0.5c_{18})\norm*{\Xi_t}_2^2 + c_{19} + \frac{1}{c_4}(c_{17}-0.5c_{18})V_t - \frac{1}{c_4}(c_{17}-0.5c_{18})V_t\\
		&\stackrel{\eqref{V_ub_fixed_VROA}}{\leq} \left( 1 + \frac{1}{c_4}(c_{17}-0.5c_{18}) \right)V_t + (c_{17} - 0.5c_{18})\norm*{\Xi_t}_2^2 + c_{19} - (c_{17} - 0.5c_{18})\norm*{\Xi_t}_2^2 - \frac{c_5}{c_4}(c_{17}-0.5c_{18})\\
		&= \left( 1 + \frac{1}{c_4}(c_{17}-0.5c_{18}) \right)V_t + c_{19} - \frac{c_5}{c_4}(c_{17}-0.5c_{18}) - \left( 1 + \frac{1}{c_4}(c_{17}-0.5c_{18}) \right)\beta(\varepsilon^*, w^*)\\
		&\quad + \left( 1 + \frac{1}{c_4}(c_{17}-0.5c_{18}) \right)\beta(\varepsilon^*, w^*)\\
		{V_{t+d_{\textup{max}}} - \beta(\varepsilon^*, w^*)}&\leq \left( 1 + \frac{1}{c_4}(c_{17}-0.5c_{18}) \right) (V_t-\beta(\varepsilon^*, w^*)) + c_{19} - \frac{c_5}{c_4}(c_{17}-0.5c_{18}) +\left( \frac{1}{c_4}(c_{17}-0.5c_{18}) \right)\beta(\varepsilon^*, w^*)\\
		V_{\beta, t+d_{\textup{max}}} &\stackrel{\eqref{beta_fcn}}{=} \left( 1 + \frac{1}{c_4}(c_{17}-0.5c_{18}) \right)(V_t-\beta(\varepsilon^*, w^*))+ c_{19} - \frac{c_5}{c_4}(c_{17}-0.5c_{18})\\
		&\quad +\left( \frac{1}{c_4}(c_{17}-0.5c_{18}) \right) \frac{c_4 c_{19} - c_5(c_{17} - 0.5 c_{18})}{(0.5 c_{18} - c_{17})}\\
	\end{aligned}
\end{equation}
The remaining terms cancel out as well and the result is the following contraction property
\begin{equation}
	V_{\beta, t+d_{\textup{max}}} \leq \underbrace{\left( 1 + \frac{1}{c_4}(c_{17}-0.5c_{18}) \right)}_{\stackrel{!}{<}1}(V_t-\beta(\varepsilon^*, w^*)).
	\label{final_contraction_result}
\end{equation}
To show that the factor above is in fact less than one and that the contraction holds, we recall that the denominator of $\beta(\varepsilon^*, w^*)$ in \eqref{beta_fcn} is strictly positive for $\lambda_\alpha\leq\bar{\lambda}_\alpha,\,\lambda_\sigma\leq\bar{\lambda}_\sigma$ and sufficiently small $\bar{c}_{\textup{pe}},\,\bar{\varepsilon},\,\bar{w}$. Therefore
\begin{equation*}
	\begin{aligned}
		0.5 c_{18} - c_{17} &> 0\\
		-0.5 c_{18} + c_{17} &< 0\\
		\frac{1}{c_4}\left(c_{17} - 0.5 c_{18} \right) &< 0,
	\end{aligned}
\end{equation*}
and hence the quantity under question in \eqref{final_contraction_result} is less than 1.\\\par
We have shown that for the specific choice of $V_{ROA}=\delta^2c_4+c_5$ the Lyapnuov function converges to $\beta(\varepsilon^*, w^*)$. However, the same can be shown for other \textit{fixed} choices of $V_{ROA}$ where $\beta(\varepsilon^*, w^*)$ takes the following form
\begin{equation*}
	\beta(\varepsilon^*,w^*) = \frac{\max\left\lbrace c_4,\frac{V_{ROA}-c_5}{\delta^2}\right\rbrace c_{19} - c_5(c_{17} - 0.5 c_{18})}{(0.5 c_{18} - c_{17})}.
\end{equation*}
By following similar arguments as above, it can be shown that the same contraction property in \eqref{final_contraction_result} holds for sufficiently small $\bar{c}_{\textup{pe}},\,\bar{\varepsilon},\,\bar{w}$ (cf. \cite{Berberich203}).
\end{proof}
	\ifCLASSOPTIONcaptionsoff
	\newpage
	\fi
	\bibliographystyle{ieeetr}
	\bibliography{refs}
\end{document}